\documentclass[12pt]{article}
\usepackage[T2A]{fontenc}
\usepackage[utf8]{inputenc}
\usepackage{xcolor}
\usepackage{amssymb,amsfonts,amsmath,mathtext,amsthm}
\usepackage{cite,enumerate,float,indentfirst}
\usepackage{graphics,epsfig,graphicx,epstopdf,amscd,latexsym,verbatim,hyperref,ulem}
\usepackage[top = 30mm, bottom = 20mm, left = 30mm, right = 25mm]{geometry}

\newtheorem{lemma}{Lemma}[section]
\newtheorem{example}{Example}[section]
\newtheorem{theorem}{Theorem}[section]
\newtheorem{corollary}{Corollary}[section]

\newtheorem{definition}{Definition}[section]
\newtheorem{remark}{Remark}[section]
\newtheorem{problem}{Problem}[section]

\def\id{\mathrm{id}}

\def\Imm{\mathrm{Im}\,}
\def\Aut{\mathrm{Aut}}

\def\Span{\mathrm{Span}}

\sloppy

\begin{document}

\begin{center}
{\Large Monomial Rota---Baxter operators of nonzero weight on $F[x,y]$ \\
coming from averaging operators}

A. Khodzitskii
\end{center}

\begin{abstract}
The intensive study of Rota---Baxter operators on the polynomial algebra $F[x]$ has been started with the work of S.H. Zheng, L. Guo, and M. Rosenkranz (2015).
We deal with the case of two variables and monomial Rota---Baxter operators of nonzero weight. 
The family of such operators arisen from homomorphic averaging operators on $F[x,y]$ is described.

\medskip
{\it Keywords}:
Rota---Baxter operator, averaging operator, polynomial algebra, decomposition of algebra.
\end{abstract}

\section{Introduction}

Rota---Baxter operator is an algebraic generalization of the integral operator.
Given an algebra $A$ over a field $F$, a linear operator $R$ on $A$ is called a Rota---Baxter operator (RB-operator, for short), if the following relation 
$$
R(a)R(b) = R\big(R(a)b + a R(b) + \lambda a b\big)
$$
holds for all $a,b \in A$.
Here $\lambda\in F$ is a fixed scalar called a weight of $R$.

When $\lambda = 0$, the relation is a generalization of the integration by parts formula. 
The definition of Rota---Baxter operator was given by G. Baxter in 1960~\cite{Baxter}. 
However, firstly the defining relation for nonzero $\lambda$ appeared in the work of F. Tricomi in 1951~\cite{Tricomi}.

G.-C. Rota was one of the most active researchers in the area of different algebraic operators, including Rota---Baxter operators then called as Baxter ones, see, e.\,g.~\cite{Rota,Rota2}.
In 1980s, Rota---Baxter operators were rediscovered in the direction of the classical and modified Yang---Baxter equations from mathematical physics~\cite{BelaDrin82,Semenov83}.
Now, we observe the growing interest in the study of RB-operators; 
in 2012, Li Guo wrote a monograph on the subject~\cite{GuoMonograph}.

The study of Rota---Baxter operators on the polynomial algebra has began only recently, in 2015, by S. H. Zheng, L. Guo, and M. Rosenkranz~\cite{Monom2}. 
In that work, the authors introduced so called monomial RB-operators, i.\,e. such RB-operators that map every monomial to either 0 or to some monomial with a nonzero coefficient.
S. H. Zheng, L. Guo, and M. Rosenkranz described all injective monomial RB-operators of weight~0 on $F[x]$ over a field~$F$ of characteristic zero.

In 2016, H. Yu classified all monomial RB-operators on $F[x]$ over a field~$F$ of characteristic zero~\cite{Monom}. 
In 2020, the description of monomial RB-operators on the ideal $F_0[x] = \langle x\rangle$ in $F[x]$ over a field~$F$ of characteristic zero was obtained~\cite{MonomNonunital}.
In~\cite{ReprRB,ReprRB2,ReprRB3,ReprRB4}, irreducible and indecomposable finite-dimensional representations of the corresponding Rota---Baxter algebras were characterized.

In~\cite{GubPer}, the conjecture of S. H. Zheng, L. Guo, and M. Rosenkranz was confirmed and thus, injective RB-operators of weight~0 on $F[x]$, where $\mathrm{char}\,F = 0$, were classified.  

Let us note the works~\cite{Ogievetsky,Viellard-Baron}, where examples of Rota---Baxter operators on $F[x,y]$ appeared.

For formal inverses of Rota---Baxter operators, derivations, monomial operators have been investigated on the polynomial algebra and the field of rational functions in a~series of works~\cite{NowZiel,Ollagnier,Kitazawa,Essen}.
Monomial derivations were used to construct counterexamples to the 14th problem of Hilbert.

In the current work, we initiate a study of monomial Rota---Baxter operators of nonzero weight on $F[x,y]$. We deal with the family of such operators which are induced by monomial homomorphic averaging operators on $F[x,y]$.
The crucial fact is that every homomorphic averaging operator $T$ on an algebra $A$ is automatically an RB-operator of weight~$-1$ on~$A$ (Lemma~\ref{R=-T_Lemma}).
Describing monomial homomorphic averaging operators on $F[x,y]$, we obtain in Theorem~\ref{AveOpClassif} (except simple cases) the following operators:

(1) $T(x^n y^m) = x^{r m} y^m$, $r \in \mathbb{N}$, 

(2) $T(x^n y^m) = \alpha^n y^{m + r n}$, $r \in \mathbb{N}$, $\alpha\in F^*$.

Actually, up to the choice of scalars, monomial homomorphic averaging operators on $F[x_1,\ldots,x_n]$ occur to be in one-to-one correspondence with idempotent matrices from~$M_n(\mathbb{N})$.

Now, we formulate the main problem of the work. 

{\bf Problem 1}. 
Describe all Rota---Baxter operators of nonzero weight on $F[x,y]$ of the form 
$$
R(x^n y^m) = \alpha_{n,m}T(x^n y^m),
$$
where $\alpha_{n,m}$ are some scalars from~$F$ 
and $T$ is a monomial homomorphic averaging operator on $F[x,y]$.

We solve this problem completely for the case (1) and for (2), when $r = 0,1$.
In the case (2), $r > 1$, the problem is much more difficult.
However, we expect that all difficulties are concerned only with technical details and the approach applied for $r = 1$ works for $r>1$ too.

In Theorem~\ref{Thm:decompositions}, all decompositions of $F[x,y]$ into a direct sum of two subalgebras containing a basis of monomials are described.
Every such decomposition corresponds to a monomial RB-operator of nonzero weight on $F[x,y]$ satisfying the condition that either $x^n y^m\in \ker R$ or $x^n y^m\in \Imm R$ for every monomial $x^n y^m$.

Let us give a short outline of the work.
In~\S2, we provide required preliminaries on Rota---Baxter and averaging operators.
In~\S3, we classify all monomial homomorphic averaging operators on $F[x,y]$ (Theorem~\ref{AveOpClassif}).
In~\S4 and in~\S5, we solve Problem~1 in simple cases (3)--(5).

In~\S6, we clarify everything in the case (1) for any value of~$r$.

In~\S7, we solve Problem in the case (2), when $r = 1$. We omit the case (2), when $r=0$, since it coincides with the case (1), $r = 0$. 
The method which we apply to classify all RB-operators coming from the case (2), $r = 1$ is following. By induction, all coefficients $\alpha_{n,m}$ for $n\geq2$ may be expressed via $\alpha_{1,k}$. Thus, the main difficulty is to understand how we may express $\alpha_{1,k}$ through $\alpha_{1,l}$ for $l<k$.
For this, we solve Problem in the particular cases $\alpha_{1,0} = \alpha_{1,1}$ (Lemma~\ref{lem:q0=q1}), $\alpha_{1,0} = \alpha_{1,2}$ (Lemma~\ref{lem:q0=q2}),
and $\alpha_{1,1} = \alpha_{1,2}$ (Lemma~\ref{lem:q1=q2}).
Further, we change coefficients by the formula
$$
s_i = \alpha - \beta \alpha_{1,i}, \quad
\alpha = \frac{\alpha_{1,1}}{\alpha_{1,0}-\alpha_{1,1}},\
\beta = \frac{1}{\alpha_{1,0}-\alpha_{1,1}}.
$$
We prove that $s_i$ satisfy the recurrence
$s_{m+1} = \dfrac{s_m + 1}{1/s_2 - s_m}$, $m\geq2$, and we solve it.

Finally, we obtain the main result of the work, it is Theorem~\ref{theorem_r=1}.
We show that any monomial operator on $F[x,y]$ of the form $R(x^n y^m) = \alpha_{n,m}y^{n+m}$ may be uniquely extended from the values $\alpha_{1,0},\alpha_{1,1},\alpha_{1,2}$ to an RB-operator.
The additional condition $2\alpha_{1,1}\neq \alpha_{1,0}+\alpha_{1,2}$ is technical, see Remark~\ref{rem:s2}.

{\bf Theorem}.
Let $\alpha_{1,0},\alpha_{1,1},\alpha_{1,2}\in F$ be pairwise distinct such that $2\alpha_{1,1}\neq\alpha_{1,0}+\alpha_{1,2}$.
Then a monomial operator of the form $R(x^n y^m) = \alpha_{n,m}y^{n+m}$, where $\alpha_{n,m}$ are expressed via $\alpha_{1,0},\alpha_{1,1},\alpha_{1,2}$ by ~\eqref{2_1n},~\eqref{k+1n}, is an RB-operator of weight $1$ on $F[x,y]$.

In~\S8, we show how to apply obtained results to get RB-operators of nonzero weight on the algebra of the Laurent polynomials $F[x,x^{-1},y,y^{-1}]$ and the algebra of formal power series $F[[x,y]]$.

We follow some conventions.
First, throughout the work, we suppose that $F$ is field of characteristic 0. 
Second, given a~scalar~$\mu\in F$, we assume that $\mu^0 = 1$.
Second, for any scalar~$\mu\in F$, we assume that $\mu^0 = 1$, in particular, $0^0 = 1$.

\section{Preliminaries}

\begin{definition}
Let $A$ be an algebra over a field $F$ and $R$ be a~linear operator on $A$. 
Then $R$~is called a Rota---Baxter operator (RB-operator, for short)
if for all $a,b\in A$ the following relation holds,
\begin{gather} \label{RBO}
R(a)R(b) = R\big(R(a)b + a R(b) + \lambda a b\big),
\end{gather}
where $\lambda\in F$ is a fixed constant (a weight of $R$).
\end{definition}

Within the work, we consider RB-operators only of nonzero weight.
Given an algebra~$A$ and an RB-operator of weight~$\lambda\neq0$ on $A$, both $\ker R$ and $\Imm R$ are subalgebras of~$A$.

Given an algebra~$A$, below by $A = B\oplus C$ we mean that $A$ is a direct vector-space sum of its two subalgebras $B$ and $C$.

\begin{lemma}[\!\!\cite{GuoMonograph}] \label{SplitRB}
Assume that an algebra $A$ equals $A_1\oplus A_2$,
where $A_1$ and $A_2$ are its subalgebras.
An operator $R$ defined by the rule
\begin{equation}\label{Split}
R(x_1+x_2) = -\lambda x_2,\quad x_1\in A_1,x_2\in A_2,
\end{equation}
is an RB-operator on $A$ of weight $\lambda$.
\end{lemma}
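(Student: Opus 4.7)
The plan is to verify the defining identity~\eqref{RBO} by a direct computation on arbitrary elements, exploiting the fact that every vector decomposes uniquely along $A = A_1 \oplus A_2$. Given $a,b \in A$, I would write $a = a_1 + a_2$ and $b = b_1 + b_2$ with $a_i, b_i \in A_i$. By~\eqref{Split}, we have $R(a) = -\lambda a_2$ and $R(b) = -\lambda b_2$, so the left-hand side of~\eqref{RBO} is immediately $R(a)R(b) = \lambda^2 a_2 b_2$.

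For the right-hand side, the next step is to expand $R(a)b + aR(b) + \lambda ab$ bilinearly into the four cross products $a_i b_j$. A direct accounting gives
$$
R(a)b + aR(b) + \lambda ab
= -\lambda a_2(b_1+b_2) - \lambda (a_1+a_2)b_2 + \lambda(a_1+a_2)(b_1+b_2),
$$
and the contributions involving $a_1 b_2$, $a_2 b_1$, and $a_2 b_2$ cancel, leaving $\lambda a_1 b_1 - \lambda a_2 b_2$. The essential use of the hypothesis then comes at the final step: since $A_1$ and $A_2$ are \emph{subalgebras}, we have $a_1 b_1 \in A_1$ and $a_2 b_2 \in A_2$, so the decomposition of $\lambda a_1 b_1 - \lambda a_2 b_2$ along $A_1 \oplus A_2$ has $A_2$-component equal to $-\lambda a_2 b_2$. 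Applying~\eqref{Split} yields $R(\lambda a_1 b_1 - \lambda a_2 b_2) = -\lambda\cdot(-\lambda a_2 b_2) = \lambda^2 a_2 b_2$, which matches the left-hand side.

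There is no real obstacle here: the whole proof is a bookkeeping exercise in the direct sum decomposition. The only subtle point — and the place where the hypothesis is actually used — is the very last step. Were $A_1, A_2$ merely subspaces, the products $a_1 b_1, a_2 b_2$ could have nontrivial components in the opposite summand, and then $R$ applied to $\lambda a_1 b_1 - \lambda a_2 b_2$ could not be read off from~\eqref{Split} without further information. So it is precisely the closure of $A_1$ and $A_2$ under multiplication that turns the bilinear rearrangement into the Rota--Baxter identity.
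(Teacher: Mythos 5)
Your computation is correct: the bilinear expansion indeed reduces the argument of $R$ to $\lambda a_1b_1-\lambda a_2b_2$, and the closure of $A_1,A_2$ under multiplication is exactly what lets you read off $R$ of that element from~\eqref{Split}, giving $\lambda^2a_2b_2$ on both sides. The paper does not reproduce a proof (it cites~\cite{GuoMonograph}), and your direct verification is precisely the standard argument given there, so nothing further is needed.
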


Let us call an RB-operator of nonzero weight~$\lambda$ arisen in Lemma~\ref{SplitRB} as a splitting RB-operator.

\begin{lemma}[\!\!\cite{BGP}] \label{splitting-criterion}
Let $A$ be a unital algebra, and let $R$ be an RB-operator of nonzero weight~$\lambda$ on~$A$.

a) If $R(1)\in F$, then $R$ is splitting.

b) $R$ is splitting if and only if $R(R(x)+\lambda x) = 0$ for all $x\in A$.
\end{lemma}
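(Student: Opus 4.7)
The plan is to handle part (b) first, since part (a) will reduce to it once the right identity is extracted.

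For the forward direction of (b), suppose $R$ is splitting: $A = A_1 \oplus A_2$ and $R(x_1 + x_2) = -\lambda x_2$. Then I would simply compute
$R(x) + \lambda x = -\lambda x_2 + \lambda(x_1 + x_2) = \lambda x_1 \in A_1$,
and since $R$ annihilates $A_1$ by construction, $R(R(x)+\lambda x) = 0$. For the converse, assume $R(R(x)+\lambda x)=0$ for every $x\in A$. The natural candidate decomposition is $A_1 = \ker R$, $A_2 = \Imm R$, both already known to be subalgebras because $\lambda\neq 0$. To see $A = A_1 + A_2$, write the tautological identity $x = \lambda^{-1}(R(x)+\lambda x) + (-\lambda^{-1}R(x))$; the assumption forces the first summand into $\ker R$ and the second is in $\Imm R$ by definition. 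Directness follows from the key consequence of the assumption: for any $z = R(u)\in \Imm R$, applying $R$ to $R(u)+\lambda u$ gives $R(z) = -\lambda z$, so any $w\in\ker R\cap\Imm R$ satisfies $0 = R(w) = -\lambda w$, hence $w=0$. Finally, the same relation $R(z) = -\lambda z$ on $\Imm R$ shows that $R$ acts by the splitting formula on the decomposition $\ker R\oplus \Imm R$.

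For part (a), the plan is to show that $R(1)\in F$ forces the identity $R(R(x)+\lambda x)=0$ and then invoke (b). Substituting $a=b=1$ into the Rota--Baxter relation and using linearity with $R(1)=c\in F$ yields $c^2 = 2c^2 + \lambda c$, so $c(c+\lambda)=0$ and $c\in\{0,-\lambda\}$. Next, substituting only $b=1$ into the relation (and again using that $R(1)=c$ is a scalar, which pulls out of $R$) produces $c R(a) = R(R(a)) + (c+\lambda)R(a)$, i.e.\ $R(R(a)) = -\lambda R(a)$. This is exactly $R(R(a)+\lambda a)=0$, so (b) applies.

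I do not anticipate a genuine obstacle here: the whole proof hinges on two one-line substitutions in the Rota--Baxter identity plus the observation that $\ker R$ and $\Imm R$ are the correct subalgebras to witness the splitting. The only mild subtlety worth stating carefully is the use of $\lambda\neq 0$ in two places: to ensure $\ker R$ and $\Imm R$ are subalgebras (already recorded in the excerpt), and to conclude $w=0$ from $\lambda w=0$ when establishing directness of the sum. Once those are in place, the rest is bookkeeping.
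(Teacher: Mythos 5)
Your argument is correct and complete. Note, however, that the paper itself gives no proof of this lemma: it is quoted from the reference [BGP], so there is no internal proof to compare against. Your route is the standard one and matches what one expects from that source: for (b) you verify the identity directly on a splitting decomposition, and conversely use the tautology $x=\lambda^{-1}(R(x)+\lambda x)-\lambda^{-1}R(x)$ together with the consequence $R(z)=-\lambda z$ on $\Imm R$ to exhibit $A=\ker R\oplus\Imm R$ (both subalgebras since $\lambda\neq0$) with $R$ acting by the splitting formula; for (a) the substitution $b=1$ with $R(1)=c\in F$ yields $R(R(a))=-\lambda R(a)$, which is exactly the criterion in (b). The only cosmetic remark is that your preliminary substitution $a=b=1$ (giving $c\in\{0,-\lambda\}$) is not actually needed, since the $b=1$ computation eliminates $c$ on its own; and you correctly flag the two places where $\lambda\neq0$ and unitality are used.
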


Let us recall the description of monomial RB-operators of nonzero weight on $F[x]$. 

\begin{theorem}[\!\!\cite{Monom}] \label{F[x]-1}
Each nonzero monomial RB-operator of weight~$-1$ on $F[x]$ is either splitting with
the subalgebras $F$ and $\langle x\rangle$ or has a form $R(x^n) = q^n$ for some $q\in F$.
\end{theorem}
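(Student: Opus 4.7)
The plan is to write $R(x^n) = a_n x^{f(n)}$ with $a_n \in F$ and $f(n) \in \mathbb{N}$ (with $f(n)$ arbitrary when $a_n = 0$), and to extract the structure of $R$ from the RB relation~\eqref{RBO} applied at well-chosen pairs $(x^m, x^n)$.

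The first step is to pin down $R(1)$. Evaluating~\eqref{RBO} at $a = b = 1$ gives
\[
a_0^2\, x^{2f(0)} \;=\; 2 a_0\, a_{f(0)}\, x^{f(f(0))} \;-\; a_0\, x^{f(0)}.
\]
If $a_0 \neq 0$ and $f(0) \geq 1$, the monomial $-a_0 x^{f(0)}$ on the right has no match on the left (since $2f(0) > f(0)$), yielding a contradiction. Hence $R(1) \in F$, and substituting back gives $R(1) \in \{0, 1\}$. By Lemma~\ref{splitting-criterion}(a), $R$ is splitting, and since the splitting operator associated to $A_1 \oplus A_2$ with $\lambda = -1$ is idempotent, $R^2 = R$. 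Writing this out on each $x^n$ yields the key constraints $a_{f(n)} = 1$ and $f(f(n)) = f(n)$ whenever $a_n \neq 0$.

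Next, I would split on $R(1) \in \{0,1\}$. In the case $R(1) = 0$, a short induction via~\eqref{RBO} at $(x, x^k)$ shows that $R \not\equiv 0$ forces $a_1 \neq 0$; then idempotency forces $f(1) \geq 1$, and setting $k := f(1)$ with $R(x^k) = x^k$ already known, I would use~\eqref{RBO} at $(x, x)$ together with further compatibility at pairs such as $(x, x^2)$ and $(x^2, x^2)$ to rule out $k \geq 2$. Once $f(1) = 1$ and $a_1 = 1$ are established, induction via~\eqref{RBO} at $(x^n, x)$ propagates $R(x^n) = x^n$ to all $n \geq 1$, the splitting operator attached to $F \oplus \langle x \rangle$. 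In the case $R(1) = 1$ with $R(x) = q \in F$ a scalar, induction via~\eqref{RBO} at $(x, x^n)$ yields $R(x^n) = q^n$ for all $n$; the remaining sub-case $R(x) = a_1 x^{f(1)}$ with $f(1) \geq 1$ is handled analogously and reduces to one of the described forms.

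The main obstacle is the combinatorial case-analysis ruling out $f(1) \geq 2$: one has to exploit the RB relation at a growing list of pairs $(x^m, x^n)$ to produce enough scalar equations to eliminate every exotic $f$-pattern. Once these are disposed of, the propagation of $R$ to all monomials is routine induction.
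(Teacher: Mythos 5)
Your skeleton --- force $R(1)\in F$ via \eqref{RBO} at $(1,1)$, deduce $R(1)\in\{0,1\}$, invoke Lemma~\ref{splitting-criterion}(a) to get that $R$ is splitting and hence idempotent, then classify by the value of $R(x)$ --- is sensible, and the two inductions you do describe (propagating $R(x^n)=x^n$ once $R(1)=0$, $R(x)=x$ are known, and propagating $R(x^n)=q^n$ once $R(1)=1$, $R(x)=q\in F$ are known) are correct. But the proposal stops exactly where the content of the theorem lies: the elimination of $f(1)\geq 2$ is only announced (``I would use \eqref{RBO} at $(x,x)$, $(x,x^2)$, $(x^2,x^2)$\dots''), not carried out. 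Already for $R(1)=0$, $R(x)=a_1x^k$ with $k\geq 3$, the relation at $(x,x)$ reads $a_1^2x^{2k}=2a_1R(x^{k+1})-R(x^2)$ with $R(x^2)=a_2x^{f(2)}$ unknown, and one must pursue several branches ($a_2=0$, $f(2)=2k$, \dots) through further pairs before they collapse; since this case analysis is the whole difficulty, what you have is a plan rather than a proof. A smaller slip in the first step: the term $-a_0x^{f(0)}$ can be matched by $2a_0a_{f(0)}x^{f(f(0))}$ when $f(f(0))=f(0)$, so your stated reason (``no match on the left'') does not cover all cases; the conclusion survives because then the left-hand term $a_0^2x^{2f(0)}$ is unmatched, but this needs saying.

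More seriously, the branch $R(1)=1$, $R(x)=a_1x^{f(1)}$ with $f(1)\geq 1$, which you dismiss as ``handled analogously and reduces to one of the described forms,'' does not so reduce. For $f(1)=1$, idempotency forces $a_1=1$, and then \eqref{RBO} at $(x,x^n)$ gives $R(x^{n+1})=R(x^{n+1}+xR(x^n)-x^{n+1})=x^{n+1}$ by induction, so $R=\id$. The identity is indeed a nonzero monomial RB-operator of weight $-1$ (check: $R(a)R(b)=ab=R(ab)$), but it is neither of the form $R(x^n)=q^n$ nor the splitting operator attached to $F\oplus\langle x\rangle$; in the language of Lemma~\ref{SplitRB} it is splitting only for the trivial decomposition $F[x]\oplus(0)$. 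So an honest completion of your case analysis produces an operator outside the dichotomy as you have read it, and you must either treat this case explicitly and reconcile it with the statement of Theorem~\ref{F[x]-1} as cited from~\cite{Monom}, or explain why it is excluded --- neither of which the proposal does (and the sub-case $f(1)\geq 2$ with $R(1)=1$ is likewise left undone). Note also that the paper itself gives no proof of this theorem, only the citation, so the comparison here is with the claimed classification itself; against that benchmark your argument is incomplete at its decisive steps and incorrect as stated in the $R(1)=1$ branch.
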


\begin{definition}
A~linear operator~$L$ on $F[x,y]$ is called monomial if for all $k,l\in\mathbb{N}$ we have
$L(x^k y^l) = \varepsilon_{kl}x^{a_{kl}}y^{b_{kl}}$,
where $\varepsilon_{kl}\in F$, $a_{kl},b_{kl}\in\mathbb{N}$. 
\end{definition}

\begin{lemma}[\!\!\cite{MonomNonunital}] \label{lem:monom-split}
Let $R$ be a monomial RB-operator of weight 1 on $F[x,y]$, then
$F[x,y] = \Imm R\oplus \ker R$ and $R|_{\Imm R} = -\id$.
Moreover, $R(1)\in\{0,-1\}$.
\end{lemma}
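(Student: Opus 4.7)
The plan is to combine the two preliminary results already at our disposal: Lemma~\ref{splitting-criterion}(a) reduces everything to showing $R(1)\in F$, after which Lemma~\ref{SplitRB} hands us both the direct-sum decomposition and the identity $R|_{\Imm R}=-\id$.

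The first step is to pin down $R(1)$. By the monomial hypothesis, $R(1)=\varepsilon x^p y^q$ for some $\varepsilon\in F$ and $p,q\in\mathbb{N}$. Substituting $a=b=1$ into the Rota--Baxter identity gives
$$
\varepsilon^2 x^{2p} y^{2q}\;=\;R\bigl(2R(1)+1\bigr)\;=\;2\varepsilon\, R(x^p y^q)+\varepsilon x^p y^q.
$$
Monomiality says $R(x^p y^q)$ is either $0$ or a single nonzero monomial $\varepsilon' x^{p'}y^{q'}$. A short case check---the left side is a single monomial while the right side is an $F$-linear combination of at most two monomials, and the monomial supports must match---forces either $\varepsilon=0$ or $(p,q)=(0,0)$. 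Indeed, if the two monomials on the right are distinct, comparing with the one-monomial left side kills a nonzero coefficient, a contradiction; and if they coincide, i.e.\ $(p',q')=(p,q)$, then the single monomial $x^{2p}y^{2q}$ on the left must equal $x^p y^q$, so $(p,q)=(0,0)$. Hence $R(1)\in F$, and in the nonzero case the resulting scalar identity $\varepsilon^2=2\varepsilon^2+\varepsilon$ gives $\varepsilon=-1$. So $R(1)\in\{0,-1\}$.

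Once $R(1)\in F$, Lemma~\ref{splitting-criterion}(a) tells us that $R$ is splitting, so by Lemma~\ref{SplitRB} there exist subalgebras $A_1,A_2$ with $F[x,y]=A_1\oplus A_2$ and $R(x_1+x_2)=-x_2$ for all $x_1\in A_1$, $x_2\in A_2$. From this formula one reads off directly that $\ker R=A_1$ and $\Imm R=A_2$ (since $A_2$ is a subspace, $-A_2=A_2$), and that $R$ acts as $-\id$ on $A_2=\Imm R$. Hence $F[x,y]=\ker R\oplus\Imm R$ and $R|_{\Imm R}=-\id$, as claimed.

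The only non-routine part of this argument is the monomial bookkeeping that forces $R(1)$ to be a constant; after that, everything is a mechanical application of the splitting criterion and the structural description of splitting RB-operators.
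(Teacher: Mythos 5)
Your argument is correct, but there is nothing in the paper to compare it with: Lemma~\ref{lem:monom-split} is imported from~\cite{MonomNonunital} and the paper gives no proof of it. Judged on its own, your route is sound and uses only the quoted preliminaries. Setting $a=b=1$ in~\eqref{RBO} with $\lambda=1$ and writing $R(1)=\varepsilon x^py^q$, the monomial bookkeeping does force $\varepsilon=0$ or $(p,q)=(0,0)$ in every sub-case ($R(x^py^q)=0$, the two right-hand monomials distinct, or coincident), so $R(1)\in F$; the scalar relation $\varepsilon^2=2\varepsilon^2+\varepsilon$ then gives $R(1)\in\{0,-1\}$; and Lemma~\ref{splitting-criterion}(a) plus the explicit form~\eqref{Split} identifies $A_1=\ker R$, $A_2=\Imm R$ and $R|_{\Imm R}=-\id$. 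Two small points are worth writing out explicitly: in the coincident sub-case $(p',q')=(p,q)$ you compare supports only after observing that the combined coefficient $2\varepsilon\varepsilon'+\varepsilon$ cannot vanish (otherwise the right side would be zero while the left side $\varepsilon^2x^{2p}y^{2q}$ is not), and the nonvanishing of the coefficient $2\varepsilon\varepsilon'$ in the distinct sub-case uses $\mathrm{char}\,F\neq2$, which the paper's blanket assumption $\mathrm{char}\,F=0$ covers. As a comparison of approaches: the cited source works over the non-unital algebra, where one cannot argue through $R(1)$ and the unital splitting criterion of~\cite{BGP}, so your proof is a genuinely shorter, self-contained derivation tailored to the unital algebra $F[x,y]$ actually used in this paper.
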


\begin{definition}[\!\!\cite{AveOp,Pei2015}]
Let $A$ be an algebra, then an operator $T$ on $A$ is called averaging operator if 
$T(a)T(b) = T(T(a)b) = T(a T(b))$ holds for all $a,b\in A$.

An averaging operator $T$ on $A$ is called homomorphic averaging operator if
$T$ is additionally an endomorphism of $A$, i.\,e., $T(ab) = T(a)T(b)$ for all $a,b\in A$.
\end{definition}

The next lemma clarifies the connection between 
Rota --- Baxter operators and homomorphic averaging operators.

\begin{lemma}\label{R=-T_Lemma}
Let $A$ be an algebra and let $T$ be a homomorphic averaging operator on~$A$.
Then $R = - T$ is a Rota --- Baxter operator of weight~1 on $A$.
\end{lemma}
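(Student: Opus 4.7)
The plan is a direct verification of the Rota--Baxter identity \eqref{RBO} with $\lambda = 1$ using the substitution $R = -T$. Since the statement is an identity to be checked pointwise for all $a,b \in A$, the proof should just be an algebraic manipulation, with no induction or structural argument needed.

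First I would expand the left-hand side of \eqref{RBO}: for $R = -T$ and $\lambda = 1$, we get
\[
R(a)R(b) = (-T(a))(-T(b)) = T(a)T(b).
\]
Then I would expand the right-hand side:
\[
R\bigl(R(a)b + a R(b) + ab\bigr) = -T\bigl(-T(a)b - a T(b) + ab\bigr) = T(T(a)b) + T(aT(b)) - T(ab).
\]
At this point the averaging conditions $T(T(a)b) = T(a)T(b)$ and $T(aT(b)) = T(a)T(b)$ fold the first two summands into $2T(a)T(b)$, while the homomorphism property $T(ab) = T(a)T(b)$ replaces the last summand by $T(a)T(b)$. The three contributions combine as $T(a)T(b) + T(a)T(b) - T(a)T(b) = T(a)T(b)$, matching the left-hand side.

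There is no real obstacle here --- all three identities defining ``homomorphic averaging operator'' (the two averaging equations and the endomorphism property) are used exactly once, and the signs conspire so that the weight works out to $+1$ rather than $-1$. The only thing worth flagging is the sign bookkeeping: without the minus sign in $R = -T$, the term $T(ab)$ would appear with a plus, and one would obtain an RB-identity of weight $-1$ instead. So I would just lay out the two displays above and point out the one-to-one correspondence between the three terms produced on the right-hand side and the three defining identities of a homomorphic averaging operator.
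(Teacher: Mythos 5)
Your proof is correct and is essentially the same as the paper's: a direct expansion of both sides of \eqref{RBO} with $\lambda=1$, using the two averaging identities and the endomorphism property to reduce $T(T(a)b)+T(aT(b))-T(ab)$ to $T(a)T(b)$. No further comment is needed.
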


\begin{proof}
We check~\eqref{RBO} directly, 
\begin{multline*}
R(R(a)b + a R(b) + ab)
 = R(-T(a)b - a T(b) + ab)
 = T(T(a)b) + T(a T(b)) - T(ab) \\
 = T(a)T(b)
 = R(a)R(b). 
  \qedhere
\end{multline*} 
\end{proof}

\begin{lemma}\label{ConjAveraging}
Let $A$ be an algebra, let $\psi\in\Aut(A)$, and let $T$ be a~(homomorphic) averaging operator on~$A$.
Then $T^{(\psi)} = \psi^{-1}T\psi$ is a~(homomorphic) averaging operator on~$A$.
\end{lemma}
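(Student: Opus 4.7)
The plan is a direct verification: by definition $T^{(\psi)} = \psi^{-1} T \psi$, so each defining identity for $T^{(\psi)}$ becomes, after conjugation, the corresponding identity for $T$, and all that is needed is to insert $\psi\psi^{-1}=\id$ in the right places and use that $\psi$ and $\psi^{-1}$ are algebra homomorphisms.

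First I would check the averaging identity $T^{(\psi)}(a)\,T^{(\psi)}(b) = T^{(\psi)}(T^{(\psi)}(a)\,b)$. Unfolding the right-hand side, $T^{(\psi)}(T^{(\psi)}(a)\,b) = \psi^{-1} T \psi\bigl(\psi^{-1}T\psi(a)\cdot b\bigr)$, which equals $\psi^{-1} T \bigl(T\psi(a)\cdot \psi(b)\bigr)$ because $\psi$ is a homomorphism and $\psi\psi^{-1}=\id$. The averaging property of $T$ applied to the pair $\psi(a),\psi(b)\in A$ then collapses the inner expression to $T\psi(a)\cdot T\psi(b)$, and distributing $\psi^{-1}$ across this product yields $T^{(\psi)}(a)\,T^{(\psi)}(b)$. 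The symmetric identity $T^{(\psi)}(a\,T^{(\psi)}(b)) = T^{(\psi)}(a)\,T^{(\psi)}(b)$ follows from the other half of the averaging axiom by exactly the same manipulation.

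If $T$ is additionally homomorphic, I would verify the homomorphism property for $T^{(\psi)}$ in the same spirit: $T^{(\psi)}(ab) = \psi^{-1}T\bigl(\psi(a)\psi(b)\bigr) = \psi^{-1}\bigl(T\psi(a)\cdot T\psi(b)\bigr) = T^{(\psi)}(a)\,T^{(\psi)}(b)$, using that $\psi$ is a homomorphism in the second equality, that $T$ is one in the third, and that $\psi^{-1}$ is one in the last.

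There is no real obstacle; the argument instantiates the general principle that any equational identity in the operations of $A$ and $T$ is preserved under conjugation by $\Aut(A)$. The only thing that requires attention is keeping track of where $\psi$ and $\psi^{-1}$ are inserted so that $\psi\psi^{-1}=\id$ can be used to telescope the computations cleanly.
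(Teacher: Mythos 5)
Your verification is correct: conjugating each defining identity of $T$ by $\psi$, using that $\psi$ and $\psi^{-1}$ are homomorphisms and that $\psi\psi^{-1}=\id$, is exactly the routine check the paper omits (the lemma is stated there without proof). Nothing is missing.
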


Define an automorphism $\psi_{x,y}$ of $F[x,y]$ as follows,
$\psi_{x,y}(x) = y$, $\psi_{x,y}(y) = x$.

\section{Monomial homomorphic averaging operators}

By Lemma~\ref{R=-T_Lemma}, every homomorphic averaging operator on $F[x,y]$ provides an RB-operator of weight~1 on $F[x,y]$.
Let us describe all monomial homomorphic averaging operators on $F[x,y]$.

\begin{theorem} \label{AveOpClassif}
Up to conjugation with $\psi_{x,y}$, a nonzero monomial homomorphic averaging operator~$T$ on $F[x,y]$ 
equals one of the following operators:

(1) $T(x^n y^m) = x^{r m} y^m$, $r \in \mathbb{N}$, 

(2) $T(x^n y^m) = \alpha^n y^{m + r n}$, $r \in \mathbb{N}$, $\alpha\in F^*$,

(3) $T(x^n y^m) = x^n y^m$,

(4) $T(x^n y^m) = \begin{cases}
0, & n > 0, \\
y^m, & n = 0,
\end{cases}$

(5) $T(x^n y^m) = \begin{cases}
\alpha^n \beta^m, & n + m > 0, \\
1, & n = m = 0,
\end{cases}$ $\alpha,\beta\in F$.
\end{theorem}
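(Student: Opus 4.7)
The plan is to recast the averaging condition as idempotency and then classify idempotent monomial endomorphisms through their exponent matrices in $M_2(\mathbb{N})$.

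First I would show that a nonzero monomial homomorphic averaging operator $T$ satisfies $T(1) = 1$ and $T^2 = T$. Since $T(1) = T(1)T(1)$, the monomial $T(1)$ is idempotent and hence equals $0$ or $1$; the value $0$ is excluded because it would force $T(a) = T(1)T(a) = 0$ for every $a$, contradicting the nonzeroness of $T$. Given $T(1) = 1$, substituting $1$ for one of the arguments in the averaging identity $T(T(u)v) = T(u)T(v) = T(uv)$ yields $T(T(u)) = T(u)$, so $T$ is idempotent. Conversely, any monomial idempotent endomorphism is automatically a homomorphic averaging operator, so the classification reduces to describing monomial idempotent endomorphisms.

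A monomial endomorphism is determined by $T(x) = \alpha x^{a}y^{b}$ and $T(y) = \beta x^{c}y^{d}$ with $\alpha,\beta \in F$ and $a,b,c,d \in \mathbb{N}$, and then $T(x^n y^m) = \alpha^n\beta^m x^{an+cm}y^{bn+dm}$. Expanding $T(T(x)) = T(x)$ and $T(T(y)) = T(y)$ and comparing monomials splits into two kinds of constraints: the exponent relations $a^2+bc = a$, $ab+bd = b$, $ac+cd = c$, $bc+d^2 = d$, which (whenever the relevant coefficients are nonzero) say precisely that $M = \left(\begin{smallmatrix} a & c \\ b & d \end{smallmatrix}\right)$ is idempotent in $M_2(\mathbb{N})$; and the coefficient relations $\alpha^a\beta^b = 1$, $\alpha^c\beta^d = 1$.

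I would then classify idempotent matrices in $M_2(\mathbb{N})$. Inspecting the trace $a+d$ and the relation $bc = a - a^2 = d - d^2$, the options are $M = 0$, $M = I$, and four rank-one families
\[
\begin{pmatrix}1 & c\\0 & 0\end{pmatrix},\quad \begin{pmatrix}1 & 0\\b & 0\end{pmatrix},\quad \begin{pmatrix}0 & c\\0 & 1\end{pmatrix},\quad \begin{pmatrix}0 & 0\\b & 1\end{pmatrix},\qquad b,c\in\mathbb{N},
\]
the latter four grouping into two orbits under the involution $\psi_{x,y}$. The case $M = 0$ yields (5); the case $M = I$ together with the coefficient constraints $\alpha = \beta = 1$ gives (3); and the rank-one orbits produce (1) and (2) after solving $\alpha^a\beta^b = \alpha^c\beta^d = 1$ and, where needed, conjugating by $\psi_{x,y}$ to bring the pivot into the $y$-row.

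Finally I would address the degenerate situations $\alpha = 0$ or $\beta = 0$, where one of $T(x), T(y)$ vanishes and the exponent matrix formalism breaks down. Here the surviving coefficient equation (with a zero base raised to a positive power forbidden) forces $c = 0$ or $b = 0$ together with $a, d \in \{0,1\}$, giving the remaining case (4) and the boundary instances of case (5). The main obstacle is sheer bookkeeping: combining the four rank-one matrix shapes, the coefficient equalities, the degenerate subcases, and the reductions via $\psi_{x,y}$ requires a meticulous case analysis to certify that the list (1)--(5) is exhaustive.
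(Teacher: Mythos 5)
Your reduction is correct as far as it goes: for a nonzero monomial endomorphism $T$ one has $T(1)=1$, and for an endomorphism the averaging identity is indeed equivalent to $T^2=T$; the exponent conditions are exactly idempotency of the exponent matrix, your list of idempotents in $M_2(\mathbb{N})$ is complete, and the four rank-one shapes do split into two $\psi_{x,y}$-orbits. Up to this point your argument is a cleaner packaging of the paper's computation (the paper records the same matrix correspondence in a remark after the proof). The coefficient relations that idempotency gives you are $\alpha^a\beta^b=1$ and $\alpha^c\beta^d=1$, one for each of $T(x),T(y)$ with nonzero coefficient.

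The gap is the final assertion that ``the rank-one orbits produce (1) and (2) after solving'' these relations. Take the orbit represented by $T(x)=\alpha$, $T(y)=\beta x^{r}y$, i.e.\ exponent matrix $\left(\begin{smallmatrix}0&r\\0&1\end{smallmatrix}\right)$. Your two relations reduce to the single condition $\alpha^{r}\beta=1$, so $\beta=\alpha^{-r}$ with $\alpha\in F^{*}$ arbitrary, giving $T(x^{n}y^{m})=\alpha^{\,n-rm}x^{rm}y^{m}$. This is the listed operator (1) only when $\alpha=1$; for $\alpha\neq1$ and $r\geq1$ it is not equal, nor $\psi_{x,y}$-conjugate, to any of (1)--(5) (the scalar is removed only by the rescaling $x\mapsto\alpha^{-1}x$, which the theorem does not permit). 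In the other orbit, $T(x)=\alpha y^{r}$, $T(y)=\beta y$, the same relations do force $\beta=1$, so the problem is confined to case (1). The paper reaches the stated normal form by using the separated coefficient identity~\eqref{AveOp:Coeff}, namely $\alpha^{an+bm}=1$ and $\beta^{cn+dm}=1$ for all $n,m$, when comparing coefficients in $T(u)T(v)=T(T(u)v)$; your constraints, being exactly what idempotency (equivalently, averaging for an endomorphism) yields, give only the product of these and therefore cannot force the coefficient to be trivial in this orbit. As written, your proposal simply asserts that the equations land on (1), which is precisely where the content of the normal forms lies: you must either justify such a separation of the coefficient conditions, or explicitly confront the one-parameter family $T(x)=\alpha$, $T(y)=\alpha^{-r}x^{r}y$ that your equations admit, since $\psi_{x,y}$-conjugation alone cannot normalize its scalar to match case (1).
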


\begin{proof}
Denote $T(x) = \alpha x^a y^c$ and 
$T(y) = \beta x^b y^d$, where $\alpha,\beta\in F$, $a,b,c,d\in\mathbb{N}$. 
Since $T$ is nonzero, we may find $f\in F[x,y]$ such that $T(f)\neq0$. 
Then the equality $T(f) = T(f\cdot1) = T(f)T(1)$ implies that $T(1) = 1$.

If $\alpha = \beta = 0$, then we get the ~operator~5).
Let $\alpha = 0$ and $\beta\neq0$.
If $b\neq0$, then we get a~contradiction, since
$$
T(T(y)\cdot1)
 = \beta T(x^b y^d)
 = 0 \neq
 T(y)T(1)
 = \beta x^b y^d.
$$
So, we get $b = 0$ and $T(y) = \beta y^d$.
From the equality $T^2(y) = T(T(y)\cdot1) = T(y)$ we conclude 
that $d = 1$ and $\beta = 1$, it is the operator~4). 
The case $\alpha\neq0$, $\beta = 0$ after the conjugation with~$\psi_{x,y}$ gives the same operator 4).

Consider $\alpha,\beta\neq0$.
Applying that $T$ is an endomorphism, we get
$T(x^ny^m) = (T(x))^n (T(y))^m = \alpha^n\beta^m x^{an+bm}y^{cn+dm}$.
Now we check that $T$ is an averaging operator,
\begin{multline*}
\alpha^{n+s}\beta^{m+t} x^{a(n+s)+b(m+t)}y^{c(n+s)+d(m+t)}
 = T(x^ny^m)T(x^sy^t) \\
 = T(T(x^ny^m)x^sy^t) 
 = \alpha^n\beta^mT(x^{an+bm+s}y^{cn+dm+t}) \\
 = \alpha^{n + an + s + bm}\beta^{m + cn + dm +t} 
 x^{a(an + s + bm) + b(cn + dm +t)}y^{c(an + s + bm) + d(cn + dm +t)}.
\end{multline*}
Therefore, 
\begin{equation} \label{AveOp:Coeff}
\alpha(\alpha^{an+bm}-1) = \beta(\beta^{cn+dm}-1) = 0, 
\end{equation}
and also
$$
(a-1)(an+bm) + b(cn+dm) = 0, \quad
(d-1)(cn+dm) + c(an+bm) = 0.
$$
Substituting $(n,m) = (0,1)$, $(n,m) = (1,0)$ in the first equation 
and $(s,t) = (0,1)$, $(s,t) = (1,0)$ in the second one, we get equations
\begin{equation} \label{AveOp:deg}
a(a-1) + bc = 0, \quad
d(d-1) + bc = 0, \quad
b(a+d-1) = 0, \quad
c(a+d-1) = 0.
\end{equation}

If $(b,c)\neq(0,0)$, then $a+d = 1$ and either $a = 1, d = 0$, or $a = 0, d = 1$.
In each case we have $bc = 0$. 
Hence, $b = 0$ and $c>0$ is arbitrary or $c = 0$ and $b>0$ is arbitrary.

If $b = c = 0$, then we get four subcases:
1) $a = d = 0$,
2) $a = 0$, $d = 1$,
3)~$a = 1$, $d = 0$,
4) $a = d = 1$.

Up to action of $\psi_{x,y}$, 
we have the following variants for $(a, b, c, d)$:
$(0,0,0,0)$, $(1,0,0,1)$, $(0,r,0,1)$, and $(0,0,r,1)$, where $r\geq0$.
In the first case, it is the operator 5).
In the second and in the third cases 
we get the operators 3) and 1) respectively,
since $\alpha = \beta = 1$ by~\eqref{AveOp:Coeff}. 

In the fourth case, we analogously get that $\beta = 1$ and it is the operator 2).
\end{proof}

\begin{remark}
Let us collect the numbers which we have introduced in the proof of Theorem~\ref{AveOpClassif}
in a matrix $A = \begin{pmatrix}
  a & b\\
  c & d
\end{pmatrix}$.
Forgetting the coefficients at monomials in Theorem~\ref{AveOpClassif},
we get by~\eqref{AveOp:deg} a~correspondence between monomial homomorphic averaging operators on $F[x,y]$ and idempotent matrices from $M_2(\mathbb{N})$.
\end{remark}


\begin{remark} 
We may generalize Theorem~\ref{AveOpClassif} onto $F[x_1,\ldots x_n]$.
As in the case $n = 2$, we introduce parameters $\beta_{ij}\in\mathbb{N}$ and $\alpha_i\in F$ such that $T(x_i) = \alpha_i x^{\beta_{i1}}\ldots x^{\beta_{in}}$, $i\in\{1,\ldots,n\}$. 
Define a homomorphism $\phi_{x_i}$, $i=1,\ldots,n$, acting from the semigroup of monomials to~$F$ as follows,
$\phi_{x_i}(x_j) = \beta_{ij}$, $j=1,\ldots,n$.
We extend $\phi_{x_i}$ on $F[x_1,\ldots,x_n]$ by linearity.
Let $x^{\bar{m}} = x_1^{m_1}\ldots x_n^{m_n}$.
The conditions of $T$~being homomorphic averaging operator imply, if we forget about scalars, the relations
\begin{multline*}
\phi_{x_i}(\bar{k}) + \phi_{x_i}(\bar{t}) 
 = \phi_{x_i}(\bar{k} + \bar{t}) 
 = \phi_{x_i}(k_1 + \phi_{x_1}(\bar{t}),\ldots,k_n+ \phi_{x_n}(\bar{t})) \\
 = \phi_{x_i}(t_1 + \phi_{x_1}(\bar{k}),\ldots, t_n+\phi_{x_n}(\bar{k})),
\end{multline*}
where $\bar{k}=(k_1,\ldots k_n)$, $\bar{t}=(t_1,\ldots t_n)$, $i = 1,\ldots, n$.
In the matrix form, we get the equality $A^2 = A$, where $A = (\beta_{i j})^n_{i, j = 1}\in M_n(\mathbb{N})$.
Thus, monomial homomorphic averaging operators up to the choice of scalars $\alpha_i$ are in one-to-one correspondence with idempotent matrices from $M_n(\mathbb{N})$.
\end{remark}

We state the problem about the description of RB-operators obtained from monomial homomorphic averaging operators.

\begin{problem} 
Find all RB-operators~$R$ of nonzero weight on~$F[x,y]$ satisfying the equality
$$
R(x^n y^m) = \alpha_{n,m} T(x^n y^m)
$$
for some $\alpha_{n,m}\in F$
and a~monomial homomorphic averaging operator~$T$.
\end{problem}

Due to the definition of~$T$, 
we get an infinite system of equations on~$\alpha_{n,m}$.
From Lemma~\ref{R=-T_Lemma}, 
we know that the values $\alpha_{n,m}=-1$ give a solution to this system.

\section{RB-operators coming from Cases (3) and (4)}

In Cases~3 and~4 we deal with an RB-operator $R$ on~$F[x,y]$ of the form $R(x^n y^m) = \alpha_{n,m} x^n y^m$. 
By Lemma~\ref{lem:monom-split}, $\alpha_{n,m} \in \{ 0, -1 \}$ for all $n,m$.
Denote $A = \Imm R$ and $B = \ker R$.
Hence, we study decompositions $F[x,y] = A\oplus B$ of subalgebras,
where both $A$ and $B$ contain bases consisting of monomials.
Suppose that $A,B\neq(0)$.

Consider different cases.

1. Let $x,y$ lie in the same subalgebra, say~$A$. 
Then $B = F1$ and $F[x,y] = F_0[x,y] \oplus F1$.

2. Let $x,y$ lie in different subalgebras, we assume that $x\in A$ и~$y\in B$.

\begin{example}
We have the decompositions
\begin{gather*}
F[x,y]
 = \Span\{x^ky^l\mid k > l\}
 \oplus \Span\{x^ky^l\mid k\leqslant l\}, \\
F[x,y]
 = \Span\{x^ky^l\mid 2k>l\}
 \oplus \Span\{x^ky^l\mid 2k\leqslant l\}
\end{gather*}
of $F[x,y]$ into a direct sum of two subalgebras.
On the other hand,
$$
\Span\{x^ky^l\mid k-l+2>0\}\oplus\Span\{x^ky^l\mid k-l+2\leqslant 0\}
$$
is not a required decomposition,
since the monomial $xy^2$ but not $(xy^2)^2$ satisfies the condition $k-l+2>0$.
\end{example}

2.1. If $B$ contains only monomials $y^n$, $n > 0$, 
or $A$ contains only monomials $x^n$, $n > 0$, then we get the following decompositions
\begin{gather*}
F[x,y] =  \langle x\rangle_{\mathrm{ideal}}\oplus F[y],\quad
F[x,y] = (\langle x\rangle_{\mathrm{ideal}}\oplus F1)\oplus F^{*}[y].\\
F[x,y] =  \langle y\rangle_{\mathrm{ideal}}\oplus F[x],\quad
F[x,y] = (\langle y\rangle_{\mathrm{ideal}}\oplus F1)\oplus F^{*}[x].
\end{gather*}

2.2. Now, we may assume that there exist monomials depending on $y$ in $A$ and there exist monomials depending on~$x$ in~$B$.

Denote
\begin{equation}\label{alpha}
\alpha 
	= \sup\limits_{x^ky^l\in A,\,k,l\geqslant 1}\{l/k\},\quad
\beta 
	= \sup\limits_{x^ky^l\in B,\,k,l\geqslant 1}\{k/l\}.
\end{equation}
The conditions of the case 2.2 imply that $\alpha,\beta<\infty$.
Indeed, suppose that $\alpha = \infty$. 
Consider $x^k y^l\in B$ for $k\geq1$.
Since $\alpha = \infty$, there exists a monomial $x^ay^b\in A$ such that $b/a>l/k$.
On the one hand, $x^{ak}y^{bk}\in A$.
On the other hand, $x^{ak}y^{bk} = (x^k y^l)^a\cdot y^{bk-al}\in B$, a~contradiction.

For $\alpha$, we have the following geometric interpretation.
Given a monomial~$x^k y^l$, we may identify it with a point~$(k,l)$ from the first quadrant of $\mathbb{R}^2$. 
Thus, all monomials from~$A$ satisfy the inequality $\alpha  k\geq l$, i.\,e., they lie below the line $y=\alpha x$ and all monomials from~$B$ satisfy the inequality $k \geq \beta l$ and so, lie above the line $x=\beta y$. 

\begin{lemma}\label{Sh=0_Lemma1}
Let $k,l\geq1$. If $\alpha k > l$, then $x^ky^l\in A$.
If $k>\beta l$, then $x^ky^l\in B$.
\end{lemma}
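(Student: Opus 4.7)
The plan is to prove both statements by contradiction. The underlying mechanism is the same: by Lemma~\ref{lem:monom-split}, the monomial-basis decomposition $F[x,y]=A\oplus B$ forces each monomial of $F[x,y]$ to lie in exactly one of the two subalgebras; moreover $A$ and $B$ are closed under products with $x\in A$ and $y\in B$.

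For the first claim, I would assume toward contradiction that $x^k y^l\in B$ while $\alpha k>l$. The strict inequality $l/k<\alpha$ together with the sup definition of $\alpha$ supplies a witness $x^a y^b\in A$ with $a,b\geq 1$ and $b/a>l/k$, equivalently $bk>al$. The point of the witness is that the single monomial $x^{ak}y^{bk}$ admits two factorizations of opposite character: as $(x^ay^b)^k$ it lies in the subalgebra $A$; while rewriting it as $(x^ky^l)^a\cdot y^{bk-al}$ --- legitimate because $bk-al>0$ --- places it in the subalgebra $B$, since $x^ky^l\in B$ by assumption and $y\in B$. Hence $x^{ak}y^{bk}$ is a nonzero monomial lying in $A\cap B=(0)$, a contradiction.

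For the second claim I would run the completely symmetric argument under the interchange $x\leftrightarrow y$, $A\leftrightarrow B$, $\alpha\leftrightarrow\beta$. Assuming $x^ky^l\in A$, a suitable witness $x^cy^d\in B$ with $c,d\geq 1$ extracted from the strict hypothesis on $\beta$ yields a single monomial of the form $x^{cl}y^{dl}=(x^cy^d)^l=(x^ky^l)^d\cdot x^{cl-kd}$ that ends up belonging simultaneously to both $A$ and $B$, closing the argument.

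The only point requiring care is that the witness monomial can actually be chosen with both exponents strictly positive; this is where the standing hypothesis of case~2.2 (both $A$ and $B$ contain a truly mixed monomial $x^ay^b$ with $a,b\geq 1$), combined with the already-established finiteness of $\alpha$ and $\beta$, enters. Beyond this bookkeeping, nothing deeper is anticipated: everything reduces to tracking exponents in a single monomial identity.
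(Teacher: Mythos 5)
Your proof is correct and essentially coincides with the paper's: it is the same trick of exhibiting one monomial with two factorizations of opposite type, except that you place the common monomial $x^{ak}y^{bk}$ in $B$ by padding with powers of $y\in B$ (exactly the computation the paper itself uses just before the lemma to show $\alpha<\infty$), whereas the paper pads $(x^{k_1}y^{l_1})^{l_0}$ with powers of $x\in A$; the second claim is then handled by the same $x\leftrightarrow y$ symmetry the paper invokes. Note only that the printed hypothesis $k>\beta l$ must be read as $k<\beta l$ (the symmetric image of $\alpha k>l$); this corrected form is what your symmetric argument, like the paper's, actually establishes.
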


\begin{proof}
Suppose that there exist a monomial $x^{k_0}y^{l_0}\notin A$ and $\alpha k_0 > l_0 \neq 0$.
Since $l_0/k_0<\alpha$ and due to the definition of the supremum, there exists a~monomial $x^{k_1}y^{l_1} \in A$ such that $l_0/k_0<l_1/k_1<\alpha$.
Suppose $(x^{k_0}y^{l_0})^{l_1}\notin A$, then
\begin{equation*}
(x^{k_0}y^{l_0})^{l_1}=x^{k_0l_1}y^{l_0l_1},\quad 
(x^{k_1}y^{l_1})^{l_0}=x^{k_1l_0}y^{l_1l_0}.
\end{equation*}
The conditions $x \in A$ and $k_1l_0<k_0l_1$ imply that
$$
(x^{k_1}y^{l_1})^{l_0}\cdot x^{k_0l_1-k_1l_0}
 = (x^{k_0}y^{l_0})^{l_1}\in A.
$$
	
Since $x^{k_0}y^{l_0}\notin A$, 
we have $x^{k_0}y^{l_0}\in B$ and $(x^{k_0}y^{l_0})^{l_1}\in B$.
Hence, $(x^{k_0}y^{l_0})^{l_1} \in B \cap A$, which contradicts to $B \cap A=(0)$.
The proof of the second part of Lemma is analogous, up to the action of $\psi_{x,y}$.
\end{proof}

\begin{corollary} \label{Sh=0_Coro}
For the $\alpha$ and $\beta$, the equality $\alpha\beta=1$ holds.
\end{corollary}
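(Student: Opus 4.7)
The plan is to prove both $\alpha\beta \geq 1$ and $\alpha\beta \leq 1$ separately, exploiting the fact that (since $A$ and $B$ both have bases of monomials and $F[x,y] = A \oplus B$) every monomial $x^k y^l$ lies in exactly one of $A$ or $B$. I would begin by quickly recording that observation: writing a monomial as $a + b$ with $a \in A$, $b \in B$ decomposed on the monomial bases forces all coefficients except one to vanish, so the monomial sits in $A$ or in $B$.

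For $\alpha\beta \geq 1$, I would argue by contradiction: assume $\alpha < 1/\beta$ and pick any rational $l_0/k_0$ strictly between $\alpha$ and $1/\beta$ with $k_0, l_0 \geq 1$. By the definition of $\alpha$ as a supremum, any monomial in $A$ satisfies $l/k \leq \alpha$, so $l_0/k_0 > \alpha$ forces $x^{k_0}y^{l_0} \notin A$. Symmetrically, $k_0/l_0 > \beta$ forces $x^{k_0}y^{l_0} \notin B$. This contradicts the dichotomy above.

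For $\alpha\beta \leq 1$, again by contradiction assume $\alpha\beta > 1$. Choose $\varepsilon > 0$ small enough that $(\alpha - \varepsilon)(\beta - \varepsilon) > 1$, then by the definition of supremum select $x^{k_1}y^{l_1} \in A$ with $l_1/k_1 > \alpha - \varepsilon$ and $x^{k_2}y^{l_2} \in B$ with $k_2/l_2 > \beta - \varepsilon$; in particular $l_1 k_2 > k_1 l_2$. Consider the monomial $M = x^{k_2 l_1} y^{l_1 l_2}$. On one hand, $M = (x^{k_2} y^{l_2})^{l_1} \in B$. On the other hand,
\[
M = (x^{k_1} y^{l_1})^{l_2} \cdot x^{k_2 l_1 - k_1 l_2},
\]
and since $x \in A$ (by the Case 2 hypothesis) and $k_2 l_1 - k_1 l_2 > 0$, both factors lie in $A$, so $M \in A$. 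This forces $M \in A \cap B = (0)$, contradicting $M \neq 0$.

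The only slightly delicate point is making sure that the supremums really are attained closely enough — i.e., the simultaneous choice of $x^{k_1}y^{l_1} \in A$ and $x^{k_2}y^{l_2} \in B$ with $l_1 k_2 > k_1 l_2$ in the second part — but this is routine once $\alpha,\beta > 0$ and finite, which are the standing hypotheses of Case~2.2. The first part, which uses only the pointwise dichotomy and density of rationals, should be the cleanest; the second part, where one has to construct the explicit monomial $M$ witnessing the contradiction, is the main (though still modest) geometric step and mirrors the trick already used in Lemma~\ref{Sh=0_Lemma1}.
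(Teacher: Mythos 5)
Your proposal is correct and takes essentially the same route as the paper: the $\alpha\beta\geq 1$ direction is exactly the paper's density-of-$\mathbb{Q}$ argument (a monomial whose slope lies strictly between $\alpha$ and $1/\beta$ can belong to neither subalgebra, contradicting the monomial dichotomy), and the $\alpha\beta\leq 1$ direction exhibits a nonzero monomial in $A\cap B$. The only cosmetic difference is that, instead of citing Lemma~\ref{Sh=0_Lemma1} as the paper does, you inline its padding-by-powers-of-$x$ trick using near-supremum witnesses $x^{k_1}y^{l_1}\in A$, $x^{k_2}y^{l_2}\in B$, which makes the corollary self-contained but adds no new idea.
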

	
\begin{proof}
Let $\alpha\beta<1$. 
Then due to the density of $\mathbb{Q}$ in $\mathbb{R}$ there exist $k,l\in\mathbb{N}_{>0}$ such that $\alpha<\frac{l}{k}<\frac{1}{\beta}$.
Geometrically, it means that we may find a~point $(k,l)$ lying in the domain between the lines $y=\alpha x$ and $x=\beta y$.
This point corresponds to a~monomial $x^ky^l$. 
By Lemma~\ref{Sh=0_Lemma1} we get $x^ky^l\notin A$ and $x^ky^l\notin B$, it is a~contradiction.

Let now $\alpha\beta>1$. 
Analogously, we get $\alpha>\frac{l}{k}>\frac{1}{\beta}$ for some $k,l\in\mathbb{N}_{>0}$.
Thus, $x^ky^l$ lies in both subalgebras $A$ and $B$, we again arrive at a~contradiction.
\end{proof}

\begin{lemma}\label{Sh=0_Lemma2}
Let $\alpha\in\mathbb{Q}$.
Then all monomials $x^ky^l$ such that $k,l\geqslant 1$ and $l=\alpha k$ either simultaneously belong to the subalgebra~$A$ or simultaneously belong to the subalgebra~$B$.
\end{lemma}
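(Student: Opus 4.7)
The plan is to argue by contradiction, exploiting the elementary observation that any two monomials lying on the same line $l=\alpha k$ through the origin admit a common integer power.

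First I would write $\alpha=p/q$ with $\gcd(p,q)=1$ and $p,q\in\mathbb{N}_{>0}$. Then the monomials $x^k y^l$ with $k,l\geq 1$ and $l=\alpha k$ are precisely $x^{qt}y^{pt}$ for $t\geq 1$. Before the main argument I would record the standing fact that every monomial belongs to exactly one of $A$ or $B$: since $A$ and $B$ each contain a monomial basis and $A\oplus B=F[x,y]$, these two sets of monomials are disjoint and their union is the full set of monomials of $F[x,y]$.

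Now suppose for contradiction that two monomials on the line lie in different subalgebras, say $x^{qa}y^{pa}\in A$ and $x^{qb}y^{pb}\in B$ for some $a,b\geq 1$. Since $A$ is a subalgebra,
\[
(x^{qa}y^{pa})^b = x^{qab}y^{pab}\in A,
\]
and since $B$ is a subalgebra,
\[
(x^{qb}y^{pb})^a = x^{qab}y^{pab}\in B.
\]
These are the same nonzero monomial, so it would lie in $A\cap B=(0)$, a contradiction.

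The main obstacle here is essentially none: once one notices that two lattice points on a rational ray from the origin have a common integer multiple, the argument is a one-line application of the subalgebra property of $A$ and $B$ together with the disjointness of their monomial bases. No appeal to $\alpha\beta=1$ or to Lemma~\ref{Sh=0_Lemma1} is required; those results handle the points strictly off the boundary line, whereas the content of the present lemma is precisely that the boundary line itself cannot be split between the two subalgebras.
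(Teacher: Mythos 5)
Your proof is correct and rests on the same key observation as the paper's: lattice points on the rational ray $l=\alpha k$ are powers of a common (primitive) monomial, so the subalgebra property together with $A\cap B=(0)$ forces them all into one subalgebra. The paper phrases it directly (every such monomial is a power of $x^{k_0}y^{l_0}$ with $\alpha=l_0/k_0$ irreducible), while you argue by contradiction via a common power $x^{qab}y^{pab}$; these are the same argument in substance.
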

	
\begin{proof}
Let $\alpha=\frac{l_0}{k_0}$ be an irreducible fraction, without lost of generality we assume that $x^{k_0}y^{l_0}\in A$.
Then for an arbitrary monomial $x^ky^l$ such that $k,l\geqslant 1$ and $l=\alpha k$ we have 
$x^ky^l=(x^{k_0}y^{l_0})^s$ for some $s$.
Hence, $x^ky^l\in A$.
\end{proof}

\begin{remark}\label{Sh=0_remark2}
The parameters $\alpha_1 \neq \alpha_2$ (or $\beta_1 \neq \beta_2$) define different subalgebras.
\end{remark}

\begin{theorem} \label{Thm:decompositions}
All possible decompositions of $F[x,y]$ into a~direct sum of two nonzero subalgebras, each of which contains a monomial basis, up to the action of $\psi_{x,y}$, are listed below,
	
\noindent I) $F[x,y] = F^*[x,y] \oplus F1$, 

\noindent II) $F[x,y] =  \langle x\rangle_{\mathrm{ideal}}\oplus F[y]$,

\noindent III) $F[x,y] = (\langle x\rangle_{\mathrm{ideal}}\oplus F1)\oplus F^{*}[y]$, 

\noindent IV) $F[x,y] = (\Span\{x^ky^l\mid k,l\geq1,\,\alpha k > l\}\oplus F1)\oplus \Span\{x^ky^l\mid k,l\geq1,\,\alpha k\leqslant l\}$, $\alpha\in\mathbb{Q}$,

\noindent V) $F[x,y] = (\Span\{x^ky^l\mid k,l\geq1,\,\alpha k \geq l\}\oplus F1)\oplus \Span\{x^ky^l\mid k,l\geq1,\,\alpha k < l\}$, $\alpha\in\mathbb{Q}$,

\noindent VI) $F[x,y] = (\Span\{x^ky^l\mid k,l\geq1,\,\alpha k > l\}\oplus F1)\oplus \Span\{x^ky^l\mid k,l\geq1,\,\alpha k< l\}$, $\alpha\in\mathbb{R}\setminus\mathbb{Q}$.

In cases IV)--VI), we have $\alpha>0$.
\end{theorem}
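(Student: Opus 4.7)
The plan is to finish the case split that begins immediately before the statement. Cases (1) and (2.1) have already been treated in the preceding text, producing exactly decompositions I, II, and III (the remaining two displays in (2.1) are their $\psi_{x,y}$-conjugates). What remains is the generic Case (2.2), where both $A$ and $B$ contain a monomial $x^k y^l$ with $k,l\geq 1$; I must show this case produces precisely the families IV, V, VI.

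The geometric content of Case (2.2) is already concentrated in Lemma~\ref{Sh=0_Lemma1}, Corollary~\ref{Sh=0_Coro}, and Lemma~\ref{Sh=0_Lemma2}. Using $\alpha\beta=1$ I read Lemma~\ref{Sh=0_Lemma1} as pinning every monomial $x^k y^l$ with $k,l\geq 1$ lying strictly off the ray $l=\alpha k$ to a predetermined subalgebra (the open half-plane $l<\alpha k$ to $A$, the opposite open half-plane to $B$), while Lemma~\ref{Sh=0_Lemma2} pins all monomials lying on the ray (if any) to a single common side. Since $x\in A$ and $y\in B$ each generate their powers under multiplication, the pure powers $x^n$ ($n\geq 1$) lie in $A$ and $y^m$ ($m\geq 1$) in $B$; the unit $1$ is in exactly one summand and, conjugating by $\psi_{x,y}$ if necessary, I may arrange $1\in A$. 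Finally $\alpha>0$ because $A$ is assumed to contain at least one mixed monomial.

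I then split on whether $\alpha\in\mathbb{Q}$. An irrational $\alpha$ leaves no lattice point with $k,l\geq 1$ on the ray, so every monomial is already placed and decomposition VI is the only outcome. A rational $\alpha$ permits the on-ray monomials to be attached either to $A$ (giving V) or to $B$ (giving IV). Conversely, each of the listed decompositions is a legitimate direct sum of subalgebras because the defining inequalities in $(k,l)$ are homogeneous and preserved under componentwise addition of exponent vectors, so each summand is multiplicatively closed; distinctness across different slopes follows from Remark~\ref{Sh=0_remark2}.

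The main obstacle is already absorbed into Lemma~\ref{Sh=0_Lemma1}: the multiplicative trick that prevents a monomial strictly below the ray from sneaking into $B$ (and symmetrically strictly above from sneaking into $A$). Once that lemma is in hand the present theorem is really a packaging statement; the only care needed is clean treatment of the boundary ray via Lemma~\ref{Sh=0_Lemma2} and the placement of the unit $1$, and making sure the pure powers $x^n$, $y^m$ are correctly attributed to the summand containing their respective generator.
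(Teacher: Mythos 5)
Your proposal is correct and follows essentially the same route as the paper: it reduces to the preceding case analysis (cases 1 and 2.1 giving I)--III)) and then derives IV)--VI) in case 2.2 from Lemma~\ref{Sh=0_Lemma1}, Corollary~\ref{Sh=0_Coro}, and Lemma~\ref{Sh=0_Lemma2}, with Remark~\ref{Sh=0_remark2} giving distinctness. The extra details you supply (placement of $1$, multiplicative closure of the spans, the rational/irrational split for the boundary ray) are exactly what the paper leaves implicit.
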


\begin{proof}
It is clear that all decompositions listed in Theorem hold. 
In the cases 1 and 2.1, we have decompositions I)--III).
In the case 2.2, the scalars $\alpha$ and $\beta$ are well-defined, 
and by Lemma~\ref{Sh=0_Lemma1}, Lemma~\ref{Sh=0_Lemma2}, and Corollary~\ref{Sh=0_Coro} we get decompositions IV)--VI). 
By Remark~\ref{Sh=0_remark2}, decompositions I)--VI) are pairwise distinct.
\end{proof}

Let us explain how the cases IV)--VI) differ from each other.
If a point $(k,l)\in \mathbb{N}^2 \setminus\{(0,0)\}$ 
from the line $y = \alpha x$ lies in one of the subalgebras, then $\alpha\in\mathbb{Q}$ and 
the set $C$ of all nonnegative integer points from the line lies entirely in one of the subalgebras.
If $C\subset A$, then it is the case V).
If $C\subset B$, then it is the case IV).
Finally, if $C\not\subset A$ and $C\not\subset B$, then $\alpha\in\mathbb{R}\setminus\mathbb{Q}$, and it is the case VI).

\begin{remark}
Let $R$ be an RB-operator of weight 1 on $F[x]$, then operator $P(x^n y^m) = R(x^n)y^m$ is an RB-operator of weight 1 on $F[x]$.
Applying this formula for operators from Theorem~\ref{F[x]-1}, we get cases I)--III) from Theorem~\ref{Thm:decompositions}.
\end{remark}

\section{RB-operators coming from Case (5)}

An RB-operator~$R$ defined by the homomorphic averaging operator~$T$ from Case~(5) of Theorem~\ref{AveOpClassif} maps $x$ and $y$ to the scalars.
Actually, such RB-operator is uniquely determinated by 
the values $R(x)$ and $R(y)$.

\begin{lemma}\label{x,y->const}
Let $R$ be a nonzero monomial RB-operator of weight~1 
such that $R(x) = \alpha$, $R(y) = \beta$, $\Imm R\subseteq F$, then $R = -T$, where~$T$ is defined in Case~(5) of Theorem~\ref{AveOpClassif}.
\end{lemma}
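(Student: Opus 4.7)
My plan is to compute $R(x^n y^m)$ for every $(n,m)\in\mathbb{N}^2$ by a two-step induction driven by the Rota---Baxter identity~\eqref{RBO}, using crucially the hypothesis $\Imm R\subseteq F$, so that each value $R(x^n y^m)$ is a scalar and the quadratic right-hand side of~\eqref{RBO} collapses to a linear combination of other $R$-values. Write $\gamma_{n,m}:=R(x^n y^m)\in F$, with data $\gamma_{1,0}=\alpha$, $\gamma_{0,1}=\beta$; the target closed form is $\gamma_{n,m}=(-1)^{n+m-1}\alpha^n\beta^m$ for $n+m\geq 1$ and $\gamma_{0,0}=-1$. This is exactly $-T(x^n y^m)$ for the Case~(5) operator $T$ of Theorem~\ref{AveOpClassif} taken with parameters $(-\alpha,-\beta)$.

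To extract the recursion, I substitute $a=x$, $b=x^n y^m$ with $n+m\geq 1$ into~\eqref{RBO}. Since $\gamma_{n,m}$ is a scalar, the argument of the outer $R$ on the right is $\alpha x^n y^m+\gamma_{n,m}x+x^{n+1}y^m$, whose image under $R$ equals $2\alpha\gamma_{n,m}+\gamma_{n+1,m}$; matching with the left-hand side $\alpha\gamma_{n,m}$ yields $\gamma_{n+1,m}=-\alpha\gamma_{n,m}$. The symmetric substitution $a=y$, $b=x^n y^m$ gives $\gamma_{n,m+1}=-\beta\gamma_{n,m}$, again for $n+m\geq 1$. Iterating the first recurrence starting from $\gamma_{1,0}=\alpha$ yields $\gamma_{n,0}=(-1)^{n-1}\alpha^n$ for $n\geq1$; then iterating the second propagates the closed form to all $(n,m)$ with $n\geq1$, and the row $n=0$ is filled in identically from $\gamma_{0,1}=\beta$.

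It remains to pin down $\gamma_{0,0}=R(1)$. Lemma~\ref{lem:monom-split} already forces $R(1)\in\{0,-1\}$, so only the value~$0$ needs to be excluded. Plugging $a=x$, $b=1$ into~\eqref{RBO} produces the scalar identity $0=\alpha(R(1)+1)$, which settles $R(1)=-1$ whenever $\alpha\neq 0$; symmetrically for $\beta\neq 0$. In the degenerate subcase $\alpha=\beta=0$ the recursion gives $\gamma_{n,m}=0$ for every $n+m\geq 1$, so nontriviality of $R$ is itself what forces $R(1)=-1$. I expect no real obstacle here: the argument is a clean double recursion controlled by~\eqref{RBO}; the only mildly delicate point is the degenerate subcase $\alpha=\beta=0$, where it is the hypothesis that $R$ is nonzero, rather than any calculation, that locks in $R(1)$.
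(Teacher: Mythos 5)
Your argument is correct, and it reaches the same closed form $R(x^ny^m)=(-1)^{n+m-1}\alpha^n\beta^m$, $R(1)=-1$, i.e. $R=-T$ for a Case~(5) operator $T$ (with parameters $-\alpha,-\beta$, a sign bookkeeping point you handle explicitly and the paper leaves implicit). The route differs from the paper's in a worthwhile way. The paper switches to the weight $-1$ normalization, substitutes $a=x^n$, $b=y^m$ into~\eqref{RBO} to obtain the multiplicative relation $\alpha_{n,m}=\alpha_{n,0}\alpha_{0,m}$ together with $\alpha_{0,0}=1$, and then quotes the one-variable classification (Theorem~\ref{F[x]-1}, via~\cite{Monom}) applied to the restrictions to $F[x]$ and $F[y]$ to get $\alpha_{n,0}=\alpha^n$, $\alpha_{0,m}=\beta^m$. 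You instead stay at weight $1$ and extract the first-order recurrences $\gamma_{n+1,m}=-\alpha\gamma_{n,m}$, $\gamma_{n,m+1}=-\beta\gamma_{n,m}$ for $n+m\geq1$ by taking $a\in\{x,y\}$, which makes the proof self-contained: no appeal to the $F[x]$ result is needed, and the degenerate case $\alpha=\beta=0$ is treated explicitly rather than being absorbed into the cited classification. Your treatment of $R(1)$ is also fine; the identity $\alpha(R(1)+1)=0$ from $a=x$, $b=1$ settles it when $\alpha\neq0$ (symmetrically for $\beta$), and in the remaining case the citation of Lemma~\ref{lem:monom-split} for $R(1)\in\{0,-1\}$ is legitimate, though you could avoid even that by substituting $a=b=1$ into~\eqref{RBO}, which gives $R(1)^2+R(1)=0$ directly. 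In short: the paper's proof is shorter modulo its external reference, yours is more elementary and dependency-free; both are sound.
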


\begin{proof}
Let us assume that the weight of $R$ equals to $-1$.
From the equality
$$
R(x^n)R(y^m)=R(\alpha_{n,0}y^m + \alpha_{0,m} x^n - x^n y^m),
$$
we obtain $\alpha_{n,m} = \alpha_{n,0}\alpha_{0,m}$. 
It implies that for $n = 0$ we have $\alpha_{0,m} = \alpha_{0,0}\alpha_{0,m}$
and for $m = 0$ we have $\alpha_{n,0} = \alpha_{0,0}\alpha_{n,0}$, 
whence $\alpha_{0,0}=1$, otherwise $R = 0$.
Since $\alpha_{0,m} = \beta^m$ and $\alpha_{n,0} = \alpha^n$ (see the proof of Theorem~\ref{F[x]-1}~\cite{Monom}), we get the required form of the operator.
\end{proof}

\section{RB-operators coming from Case (1)}

\subsection{Case (1) for $r=0$}

Note that Cases (1) and (2) when $r = 0$ coincide.

\begin{lemma}\label{lemma_r=0}
Let $R$ be an RB-operator of weight~1 and $R(x^n y^m) = \alpha_{n,m} y^m$ for some $\alpha_{n,m}\in F$ such that $\Imm R\not\subseteq F1$.
Then there exist $\beta,\gamma\in F$ such that $R$ has the following form,
$$
R(x^n y^m) = \begin{cases}
-\beta^n y^m, & m>0, \\
\theta\gamma^n, & m=0, 
\end{cases}
$$
where $\theta = \{0,-1\}$.
\end{lemma}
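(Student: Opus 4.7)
The plan is to reduce the problem to a system of equations on the coefficients $\alpha_{n,m}$. Applying~\eqref{RBO} with $a = x^n y^m$, $b = x^s y^t$ and collecting the coefficient of $y^{m+t}$ yields the master identity
$$
\alpha_{n,m}\alpha_{s,t} = \alpha_{n,m}\alpha_{s,m+t} + \alpha_{s,t}\alpha_{n,m+t} + \alpha_{n+s,m+t},
$$
which I will exploit by judicious specializations of $(n,m,s,t)$.

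First, setting $t=0$ in the master identity gives the recurrence $\alpha_{n+s,m} = -\alpha_{n,m}\alpha_{s,m}$ for each fixed $m$. Putting $n=s=0$ forces $c_m := \alpha_{0,m}\in\{0,-1\}$; when $c_m=0$, setting $n=0$ in the recurrence gives $\alpha_{s,m}=0$ for all $s$, while when $c_m=-1$ an induction on $n$ yields $\alpha_{n,m}=-\beta_m^n$ with $\beta_m := -\alpha_{1,m}$. Next, setting $n=s=0$ in the master identity yields
$$
c_m c_t = c_{m+t}(c_m+c_t+1),
$$
and combining this with the special case $c_m^2 = c_{2m}(2c_m+1)$, an induction on $m$ starting from $c_1$ shows that $c_m$ is constant for $m\geq 1$: namely, $c_1=-1$ forces $c_m=-1$ for all $m\geq 1$, while $c_1=0$ forces $c_m=0$ for all $m\geq 1$. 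The hypothesis $\Imm R\not\subseteq F1$ excludes the latter (under which $R(x^n y^m)=0$ for every $m\geq 1$), so $c_m=-1$ and $\alpha_{n,m}=-\beta_m^n$ for every $m\geq 1$.

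To show that $\beta_m$ does not actually depend on $m$, I would substitute $\alpha_{n,m}=-\beta_m^n$ into the master identity for $m,t\geq 1$ and rearrange to obtain the factorization
$$
(\beta_m^n - \beta_{m+t}^n)(\beta_t^s - \beta_{m+t}^s) = 0,
$$
valid for all $n,s\geq 0$. Taking $n=s=1$ forces $\beta_{m+t}\in\{\beta_m,\beta_t\}$, and induction on $m$ with $t=1$ (with $\beta_2=\beta_1$ as base case) then yields $\beta_m=\beta_1 =: \beta$ for every $m\geq 1$.

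Finally, for $m=0$ the recurrence from the first step applied to the single index gives $\theta := \alpha_{0,0}\in\{0,-1\}$, with $\alpha_{n,0}=0$ for all $n$ if $\theta=0$ and $\alpha_{n,0}=-\gamma^n$ with $\gamma := -\alpha_{1,0}$ if $\theta=-1$; using the convention $0^0=1$, both cases can be written uniformly as $\alpha_{n,0}=\theta\gamma^n$. The main anticipated obstacle is the dichotomy step for $c_m$: the functional equation on $c_m$ does not determine $c_{m+t}$ when $c_m+c_t+1=0$, so the induction must be organized carefully (using pairs $(c_1,c_m)$ with $c_1=c_m$, where the degeneracy does not occur) to exclude a hypothetical mixed scenario. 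Once the dichotomy on $c_m$ is established, everything else is routine bookkeeping.
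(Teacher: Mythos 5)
Your proposal is correct and follows essentially the same route as the paper: derive the coefficient identity from~\eqref{RBO}, deduce multiplicativity $\alpha_{n+s,m}=-\alpha_{n,m}\alpha_{s,m}$ so that $\alpha_{n,m}=-\beta_m^n$, obtain the factorization $(\beta_m^n-\beta_{m+t}^n)(\beta_t^s-\beta_{m+t}^s)=0$, and conclude $\beta_m=\beta_1$ by induction. The only (harmless) deviations are that you settle the boundary rows self-containedly --- the dichotomy $\alpha_{0,m}\in\{0,-1\}$ combined with $\Imm R\not\subseteq F1$ instead of the paper's appeal to $\ker R\cap\Imm R=(0)$, and the recurrence for $\alpha_{n,0}$ instead of citing Theorem~\ref{F[x]-1} --- and your handling of the degenerate case $c_m+c_t+1=0$ is correctly organized.
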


\begin{proof}
Let us assume that the weight of $R$ equals to $-1$.
By the condition of Lemma, 
$y^k\in\Imm R$ for some $k>0$. If $R(y) = 0$, then $y^k\in \ker R$, a~contradiction with $\ker R\cap \Imm R = (0)$.
Hence, $R(y) = y$ and so, $R(y^l) = y^l$ for all $l>0$.
By~\eqref{RBO}, we get the relations on $\alpha_{n,m}$,
\begin{equation}\label{epsilon_r=0}
\alpha_{n,m}\alpha_{s,t} = 
\alpha_{n,m}\alpha_{s,t + m} + \alpha_{s,t}\alpha_{n,t + m} - \alpha_{n + s,t + m}.
\end{equation}
By Theorem~\ref{F[x]-1}, we have $\alpha_{n,0} = \gamma^n$ for some $\gamma\in F$.
By~\eqref{epsilon_r=0}, for $m = 0$ we have $\alpha_{n + s,t} = \alpha_{s,t}\alpha_{n,t}$.
Thus, $\alpha_{n,m} = \beta_{m}^n$ for $n,m >0$ and some $\beta_{m}\in F$.
Thereby, we rewrite \eqref{epsilon_r=0}~for $n = s = 1$ as follows,
$$
(\beta_m - \beta_{t+m})(\beta_t-\beta_{t+m}) = 0.
$$
Taking $m = t = 1$, we derive that $\beta_2 = \beta_1$.
Further, by induction one may prove that $\beta_i = \beta_1$ for every $i>1$.
\end{proof}


\subsection{Case (1) for $r\geq 1$}

\begin{lemma}
Let $R$ be a nonzero RB-operator of weight~1 
and $R(x^n y^m) = \alpha_{n,m} x^{rm} y^m$, $r\geq 1$, $\Imm R\neq F$.
Then $R$ has the following form
\begin{equation} \label{eq:Case1r>=1}
R(x^n y^m) 
 = \begin{cases}
-(-\alpha_{0, 1})^{\frac{rm - n}{r}}x^{r m} y^m, & m > 0, \\
\theta\alpha_{1, 0}^n, & m = 0, \\
\end{cases}
\end{equation}
where $\theta\in \{0, -1\}$.
\end{lemma}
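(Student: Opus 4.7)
The plan is to translate the Rota--Baxter identity into a single two-parameter functional equation for the scalars $\alpha_{n,m}$, and then to exploit the splitting statement of Lemma~\ref{lem:monom-split}. Expanding
$$
R(x^ny^m)R(x^sy^t)=R\bigl(R(x^ny^m)\,x^sy^t+x^ny^m\,R(x^sy^t)+x^ny^mx^sy^t\bigr)
$$
and reading off the coefficient of $x^{r(m+t)}y^{m+t}$ on both sides produces the master relation
$$
\alpha_{n,m}\alpha_{s,t}=\alpha_{n,m}\alpha_{rm+s,m+t}+\alpha_{s,t}\alpha_{n+rt,m+t}+\alpha_{n+s,m+t}, \quad (\ast)
$$
which will drive everything. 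Because $R$ is a monomial RB-operator of weight $1$, Lemma~\ref{lem:monom-split} gives $F[x,y]=\Imm R\oplus\ker R$ and $R|_{\Imm R}=-\id$; since $\Imm R$ is the linear span of those $x^{rm}y^m$ for which the row $\{\alpha_{n,m}\}_n$ is not identically zero, every such row satisfies the normalisation $\alpha_{rm,m}=-1$.

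The case $m=0$ is immediate: specialising $(\ast)$ to $m=t=0$ gives the scalar identity $\alpha_{n+s,0}=-\alpha_{n,0}\alpha_{s,0}$, so $\alpha_{0,0}\in\{0,-1\}$ (this is $\theta$), and in the nonzero branch the recursion expresses $\alpha_{n,0}$ from $\alpha_{1,0}$ as in the proof of Theorem~\ref{F[x]-1}.

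For $m>0$ whose row is not identically zero, $\alpha_{rm,m}=-1$, and taking $(s,t)=(1,0)$ in $(\ast)$ collapses it to $\alpha_{n+1,m}=-\alpha_{n,m}\,\alpha_{rm+1,m}$. Writing $\gamma_m:=-\alpha_{rm+1,m}$, one gets $\alpha_{n,m}=\alpha_{0,m}\gamma_m^n$; then $\alpha_{rm,m}=-1$ forces $\gamma_m\neq 0$ and $\alpha_{0,m}=-\gamma_m^{-rm}$. Thus the whole $m$-th row is determined by the single element $\gamma_m\in F^\times$ via $\alpha_{n,m}=-\gamma_m^{n-rm}$.

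The main obstacle is to show that $\gamma_m$ is independent of $m$. Plugging $\alpha_{n,m}=-\gamma_m^{n-rm}$ into $(\ast)$ and simplifying, a direct computation collapses the identity to $XY=X+Y-1$, i.e.\ $(X-1)(Y-1)=0$, where $X:=(\gamma_m/\gamma_{m+t})^{n-rm}$ and $Y:=(\gamma_t/\gamma_{m+t})^{s-rt}$. Choosing $n=rm+1$ and $s=rt+1$ makes $X=\gamma_m/\gamma_{m+t}$ and $Y=\gamma_t/\gamma_{m+t}$, so the dichotomy $\gamma_m=\gamma_{m+t}$ or $\gamma_t=\gamma_{m+t}$ holds for every $m,t\ge 1$. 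Specialising $t=1$ gives $\gamma_{m+1}\in\{\gamma_m,\gamma_1\}$, and a one-line induction on $m$ yields $\gamma_m=\gamma_1$ for all $m\ge 1$. The degenerate case $\alpha_{0,1}=0$ is ruled out by the hypothesis $\Imm R\neq F$: applying $(\ast)$ at $(n,s,m,t)=(0,0,1,k)$ one would then obtain $\alpha_{0,k+1}=-\alpha_{0,k}\alpha_{rk,k+1}$, which inductively forces every row with $m\ge 1$ to vanish, so $\Imm R\subseteq F$. Once $\gamma_m=\gamma_1$ is established, the relation $\gamma_1^r=-1/\alpha_{0,1}$ (read off from $\alpha_{r,1}=-1$) rewrites $-\gamma_1^{n-rm}$ as $-(-\alpha_{0,1})^{(rm-n)/r}$, giving~\eqref{eq:Case1r>=1}.
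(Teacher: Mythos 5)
Your argument is sound in its main line and follows the paper's own proof quite closely: your master relation is exactly \eqref{NewEpsilonSystem}, your recursion $\alpha_{n+1,m}=-\alpha_{n,m}\alpha_{rm+1,m}$ and the dichotomy $(X-1)(Y-1)=0$ are the paper's $q_{n+s}=-q_nq_{rm+s}$ and $(\beta_m-\beta_{m+t})(\beta_t-\beta_{m+t})=0$ in different notation, and getting the normalisation $\alpha_{rm,m}=-1$ from Lemma~\ref{lem:monom-split} together with the parametrisation $\alpha_{n,m}=-\gamma_m^{\,n-rm}$ is a genuine streamlining: it avoids the paper's bookkeeping with $rm$-th roots and the correction factors $\varepsilon(m)$.

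There is, however, a gap in how you dispose of vanishing rows. The induction $\gamma_{m+1}\in\{\gamma_m,\gamma_1\}\Rightarrow\gamma_m=\gamma_1$ presupposes that $\gamma_m$ is defined for every $m\ge1$, i.e. that no row with $m\ge1$ is identically zero; but you only exclude the vanishing of row $1$ (the case $\alpha_{0,1}=0$). The configuration ``row $1$ nonzero, row $m$ identically zero for some $m\ge2$'' is never addressed, and for such an $m$ the formula \eqref{eq:Case1r>=1} would be false, so this configuration must be ruled out --- this is precisely what the second half of the paper's Case~2 does (the contradiction between $\alpha_{0,kt}\ne0$ and $\alpha_{0,km}=0$). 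In your setup the repair is short: if rows $m$ and $1$ are nonzero but row $m+1$ vanished, then the master relation applied to the pair of rows $(m,1)$ would read $\alpha_{n,m}\alpha_{s,1}=0$, contradicting the fact (from your own third paragraph) that a nonzero row has all entries nonzero; induction on $m$ then shows every row with $m\ge1$ is nonzero, after which your argument goes through. Two cosmetic points: the weight-$1$ recursion for the bottom row gives $\alpha_{n,0}=-(-\alpha_{1,0})^n$ (the statement's $\theta\alpha_{1,0}^n$ is the same family up to renaming the parameter, a point the paper also glosses), and $\gamma_1^r=-1/\alpha_{0,1}$ is read off from $\alpha_{0,1}=-\gamma_1^{-r}$, not from $\alpha_{r,1}=-1$, which holds automatically.
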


\begin{proof}
By~\eqref{RBO}, we have
\begin{equation} \label{NewEpsilonSystem}
\alpha_{n, m} \alpha_{s, t} = \alpha_{n, m}\alpha_{rm + s, m + t}
+ \alpha_{s, t} \alpha_{tr + n, m + t} + \alpha_{n + s, m + t}. 
\end{equation}
As in the proof of Lemma~\ref{lemma_r=0}, we get $\alpha_{n, 0} = \theta\alpha_{1, 0}^n$, where $\theta\in\{0,-1\}$.

Let $m\geq1$ and denote $q_{n} := \alpha_{n, m}$.
By~\eqref{NewEpsilonSystem} for $t = 0$ we have 
$$
q_n\alpha_{s, 0} = q_{n}q_{rm + s} + \alpha_{s, 0} q_{n} + q_{n + s}, 
$$
i.\,e. $q_{n + s} = -q_{n}q_{rm + s}$.

{\sc Case 1}: $\alpha_{n, m} \neq 0$ for all $n\geq0$ and $m\geq1$.
If $s = 0$, then $q_{rm} = -1$.
For $s = 1$, $q_{n + 1} = -q_{n}q_{rm + 1}$. We may continue on the sequence of the equalities, expressing 
$$
q_{n + 1} = (-q_{rm + 1})^{n + 1} q_0.
$$
Taking $n = rm - 1$, we get $q_{rm} = (-q_{rm + 1})^{rm} q_0= - 1$, 
hence $q_{rm + 1} = - \sqrt[rm]{-q_0^{-1} }$,
this root can be expressed, otherwise $R$~is not an RB-operator 
of the required form satisfying the conditions of Lemma.
Thus,
$$
q_n 
 = q_0(-q_{rm + 1})^n
 = -(-q_0)(\sqrt[rm]{-1/q_0})^n
 = -(-q_0)^{\frac{rm - n}{rm}}.
$$
We may write down the following equality,
\begin{equation} \label{Case1r>0QnViaQ0}
\alpha_{n, m} = - (-\alpha_{0, m})^{\frac{rm - n}{rm}},\ m \geq 1, \quad 
\alpha_{n, 0} = -\alpha_{1, 0}^n.
\end{equation}
Let us show that these roots are expressed via $\alpha_{0,1}$. 
For this, denote $\beta_m = -1/\alpha_{rm+1,m}$, then~\eqref{Case1r>0QnViaQ0} implies the equality $\alpha_{n,m} = - \beta_m^{rm -n}$,
and~\eqref{NewEpsilonSystem} takes the form 
$$
\beta_m^{rm - n}\beta_t^{rt - s} =
\beta_m^{rm - n}\beta_{m + t}^{rt - s} + \beta_t^{rt - s}\beta_{m + t}^{rm - n}
- \beta_{m + t}^{r(m + t) - n - s}.
$$
Collecting the terms, we have 
$$
\big(\beta_m^{rm-n} - \beta_{m+t}^{rm-n}\big)\big(\beta_t^{rt-s}-\beta_{m+t}^{rt-s}\big) = 0.
$$
For $n = rm-1$ and $s = rt-1$ the following relation holds, $(\beta_m-\beta_{m+t})(\beta_t-\beta_{m+t}) = 0$.
It is easy to check that $\beta_m = \beta_1$ for all $m\geq1$ by induction on~$m$.
The relation $\beta_m = \beta_1$ implies 
$\alpha_{0, m} = - (-\alpha_{0, 1})^m$, and then
$$
-\alpha_{n, m} = (-\alpha_{0, m})^{\frac{rm - n}{rm}}  
 = ((-\alpha_{0, 1})^m)^{\frac{rm - n}{rm}}
 = (((-1 / \alpha_{0, 1})^m)^{1/rm})^{n-rm}.
$$ 
In the obtained expressions for different $m$, distinct roots of the degree $r$ from $-1/\alpha_{0,1}$ may occur.
Let this root be fixed for $m = 1$, we express other roots through it.
Introduce the multiplier $\varepsilon(m)$ which depends on~$m$, i.\,e. $\varepsilon(m)^r = 1$ and $\varepsilon(1) = 1$.
Then $-\alpha_{n, m} = (-\alpha_{0, 1})^{\frac{rm - n}{r}}\varepsilon(m)^n$.
Applying this equality for $\alpha_{n,m}$ in~\eqref{NewEpsilonSystem}, we get
$$
(\varepsilon(m)^n-\varepsilon(m+t)^n)
(\varepsilon(t)^s-\varepsilon(m+t)^s) = 0, \quad m,t\geq1. 
$$
For $n = s = 1$, we conclude that $\varepsilon(m) = 1$ for all $m\geq1$.

{\sc Case 2}:
$\alpha_{n,m} = 0$ for some $n\geq0$, $m\geq1$.
First, let us show that $\alpha_{d,m} = 0$ for all $d\geq0$.
Let us again apply the notation $q_n = \alpha_{n,m}$.
Suppose to the contrary, there exists $f$ such that $q_f\neq0$.
The above stated equality $q_{d+s} = -q_d q_{rm+s}$ for $s=0$ and $d=f$ implies  $q_{rm} = -1$.
Next, $q_{rm} = (-q_{rm+1})^{rm}q_0$, hence $q_0,q_{rm+1}\neq0$. 
Therefore, $q_n = (-q_{rm+1})^nq_0\neq0$ for any $n$, it is a~contradiction.

Now we prove that $\alpha_{s, t} = 0$ for all $s\geq 0$, $t\geq1$.
Suppose to the contrary, $\alpha_{0,t}\neq0$ for some $t\geq1$.
Then by~\eqref{NewEpsilonSystem} we have
$\alpha_{0,t}^2 = 2\alpha_{0,t}\alpha_{rt,2t} + \alpha_{0,2t}$.
We conclude that $\alpha_{0,2t}\neq0$, otherwise $\alpha_{rt,2t} = 0$ by the previous paragraph, and so, $\alpha_{0,t}^2 = 0$, a contradiction.
Analogously, we get $\alpha_{0,kt}\neq0$ for all $k\geq1$.
Since $\alpha_{0,m} = 0$,
we derive $\alpha_{0,km}=0$ for all $k\geq1$ by~\eqref{NewEpsilonSystem}. It means that $\alpha_{0,mt}$ has to be zero and nonzero, we arrive at a~contradiction.

Therefore, all coefficients $\alpha_{a,b}$ for $b\geq1$ are simultaneously zero or nonzero.
If $\alpha_{a,b} = 0$ for all $b>0$, then $\Imm R = F$, it contradicts to the conditions of Lemma.
\end{proof}

\begin{remark}
In~\eqref{eq:Case1r>=1}, 
the expression $(-\alpha_{0, 1})^{\frac{rm - n}{r}}$ means
$(-\alpha_{0, 1})^m \times(\sqrt[r]{-1/\alpha_{0, 1}})^n$
for some root of degree $r$ from $-1/\alpha_{0,1}$.
\end{remark}

\section{RB-operators coming from Case (2) for $r=1$}\label{sec:Case2r=1}

Let an RB-operator $R$ have the form $R(x^n y^m) = \alpha_{n,m} y^{n + m}$ and $\Imm R\not\subseteq F1$. 
By~\eqref{RBO}, we get the following system of equations on the coefficients  
\begin{equation}\label{epsilon_r=1}
\alpha_{n,m}\alpha_{s,t} = 
\alpha_{n,m}\alpha_{s,t + m + n} + \alpha_{s,t}\alpha_{n,t + m + s} + \alpha_{n + s,t + m}.
\end{equation}

As in the proof of Lemma~\ref{lemma_r=0}, we conclude that $R(y^l) = -y^l$ for all $l>0$, i.\,e. $\alpha_{0,l} = -1$.
Thus, for $n=0$ or $s=0$ we get in~\eqref{epsilon_r=1} the trivial equality $0 = 0$.

We express $\alpha_{1,m}$ for $m > 2$ from the following equations
\begin{gather*}\label{system1011}
\alpha_{1,0}\alpha_{1,m-1}
 = \alpha_{1,0}\alpha_{1,m} + \alpha_{1,m-1}\alpha_{1,m} + \alpha_{2,m - 1}, \\
\alpha_{1,1}\alpha_{1,m-2}
 = \alpha_{1,1}\alpha_{1,m} + \alpha_{1,m-2}\alpha_{1,m} + \alpha_{2,m - 1}.
\end{gather*}
Subtracting the second equation from the first one, we obtain
\begin{gather}\label{1_1n}
(\alpha_{1,0} + \alpha_{1,m-1} - \alpha_{1,1} - \alpha_{1,m-2})\alpha_{1,m} = \alpha_{1,0}\alpha_{1,m-1} - \alpha_{1,1}\alpha_{1,m-2}. 
\end{gather}
If the denominator in~\eqref{1_1n} is nonzero, then we express $\alpha_{1,m}$ as follows,
\begin{gather}\label{2_1n}
\alpha_{1,m} = \frac{\alpha_{1,0}\alpha_{1,m-1} - \alpha_{1,1}\alpha_{1,m-2}}{\alpha_{1,0} + \alpha_{1,m-1} - (\alpha_{1,1} + \alpha_{1,m-2})}. 
\end{gather}
Analogously, we get a more general formula for $0 \leq t,k \leq m - 1$, $m>2$,
\begin{gather}\label{1general_alpha1m}
(\alpha_{1,k} + \alpha_{1,m-k-1} - \alpha_{1,t} - \alpha_{1,m-t-1})\alpha_{1,m} = \alpha_{1,k}\alpha_{1,m-k-1} - \alpha_{1,t}\alpha_{1,m-t-1}.
\end{gather}
Note that the formula~\eqref{1general_alpha1m} still holds when $m=1,2$.
Provided the denominator is nonzero, we obtain by~\eqref{1general_alpha1m} the formula
\begin{gather}\label{2general_alpha1m}
\alpha_{1,m} = \frac{\alpha_{1,k}\alpha_{1,m-k-1} - \alpha_{1,t}\alpha_{1,m-t-1}}{\alpha_{1,k} + \alpha_{1,m-k-1} - (\alpha_{1,t} + \alpha_{1,m-t-1})}. 
\end{gather}

By~\eqref{epsilon_r=1}, for $s = 1$ and $t=0$ we have
\begin{equation}\label{k+1n}
\alpha_{k + 1,m} = \alpha_{k,m}(\alpha_{1,0}-\alpha_{1,k+m}) - \alpha_{1,0}\alpha_{k,m + 1}.
\end{equation}

Thus, $R$ is determinated by the coefficients $\alpha_{s,t}$, where $s = 1$ and $t$ is arbitrary.
Our current goal is to find parameters through which all the coefficients would be expressed.
Afterwards, we will see that the set of such parameters is finite.

\subsection{Case $\alpha_{1,0} = \alpha_{1,1}$}

\begin{lemma} \label{lem:q0=q1}
Fix $\alpha_{1,0}\in F$ and let $\alpha_{1,i} = \alpha_{1,0}$ for all $i\in\mathbb{N}$.
Then an RB-operator $R$ takes the form 
$R(x^n y^m)= (-1)^{n+1}\alpha_{1,0}^n y^{n+m}$. 
\end{lemma}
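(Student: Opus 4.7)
The plan is to establish the formula $\alpha_{n,m} = (-1)^{n+1}\alpha_{1,0}^n$ for every $n\geq 1$ and $m\geq 0$ by induction on $n$, driven entirely by the recurrence~\eqref{k+1n}. The base case $n=1$ holds by the very hypothesis of the lemma, since $\alpha_{1,m} = \alpha_{1,0} = (-1)^{2}\alpha_{1,0}^1$ for every $m\geq 0$. For $n = 0$, $m > 0$, the formula returns $(-1)^1\alpha_{1,0}^0 = -1$, which matches the value $\alpha_{0,m} = -1$ already recalled at the start of Section~\ref{sec:Case2r=1}; for $(n,m) = (0,0)$ it matches the choice $R(1) = -1$ permitted by Lemma~\ref{lem:monom-split}.

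For the inductive step I will substitute the hypothesis $\alpha_{1,k+m} = \alpha_{1,0}$ into~\eqref{k+1n}. The factor $\alpha_{1,0} - \alpha_{1,k+m}$ vanishes identically, so the recurrence collapses to the one-term relation
$$
\alpha_{k+1,m} = -\alpha_{1,0}\,\alpha_{k,m+1}.
$$
Assuming inductively that $\alpha_{k,m'} = (-1)^{k+1}\alpha_{1,0}^k$ for all $m'\geq 0$, and specialising to $m' = m+1$, this immediately gives
$$
\alpha_{k+1,m} = -\alpha_{1,0}\cdot(-1)^{k+1}\alpha_{1,0}^k = (-1)^{k+2}\alpha_{1,0}^{k+1},
$$
independent of $m$, which closes the induction.

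I do not anticipate any real obstacle here. The decisive observation is the algebraic collapse of~\eqref{k+1n} under the equality $\alpha_{1,k+m} = \alpha_{1,0}$: it decouples the recursion in the sense that once a full row $\alpha_{k,\cdot}$ is known to be constant in $m$, the next row is determined by a single scalar multiplication by $-\alpha_{1,0}$. Consequently, none of the auxiliary relations~\eqref{2_1n}, \eqref{1general_alpha1m}, or~\eqref{2general_alpha1m} need to be invoked, and the only minor bookkeeping issue, the value of $R(1)$, is already settled by Lemma~\ref{lem:monom-split}.
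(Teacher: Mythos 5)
Your proof is correct and follows essentially the same route as the paper: both arguments run induction on $n$ via the recurrence~\eqref{k+1n}, using the hypothesis $\alpha_{1,k+m}=\alpha_{1,0}$ to kill the first term so that $\alpha_{k+1,m}=-\alpha_{1,0}\alpha_{k,m+1}$. The only difference is cosmetic (you also note the $n=0$ and $R(1)$ values explicitly), so nothing further is needed.
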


\begin{proof}
We have to express $\alpha_{n,m}$ for arbitrary $n > 1$.
By~\eqref{k+1n} for $k = 1$ we have
$$
\alpha_{2,m} = \alpha_{1,n}(\alpha_{1,0}-\alpha_{1,n+1})
- \alpha_{1,0}\alpha_{1,n + 1} = -\alpha_{1,0}^2.
$$
Further, suppose that for $k < n + 1$ the equality $\alpha_{n,m} = (-1)^{n+1}\alpha_{1,0}^n$ is fulfilled, then we obtain by induction
\begin{multline*}
\alpha_{n + 1,m} 
 = \alpha_{n,m}(\alpha_{1,0}-\alpha_{1,n+m}) 
 - \alpha_{1,0}\alpha_{n,m + 1} = -\alpha_{1,0}\alpha_{n,m + 1}  \\
 = (-\alpha_{1,0})(-1)^{n+1}\alpha_{1,0}^n 
 = (-1)^n\alpha_{1,0}^{n+1}.
 \qedhere 
\end{multline*}
\end{proof}

\begin{definition}
Let $P$ be a~condition, then the function $\chi_P$ is called characteristic if $\chi_P = 1$, when $P$ is true, and $\chi_P = 0$, otherwise.

\end{definition}

For example, $\chi_{n+1\,|\,k+m} = 1$ if $n+1$ divides $k+m$, and $\chi_{n+1\,|\,k+m} = 0$, otherwise.

\begin{lemma} \label{lem:q0,qn}
Let $\alpha_{1,0}=\ldots=\alpha_{1,n-1}$, $n>1$, and $\alpha_{1,n}\neq \alpha_{1,0}$, then
$$
R(x^ky^m)
 = \begin{cases}
  {-}\!\left(\!(-\alpha_{1,0})^k
  {+} (-\alpha_{1,0})^{k-1}\dfrac{k(n+1)}{m+k}(\alpha_{1,0}{-}\alpha_{1,n})\chi_{n+1\,|\,k+m}\!\right)\!y^{k+m}, & k>0, \\
 -y^m, & k {=} 0, m>0, \\
 \theta, & k = m = 0,
  \end{cases}
$$
where $\theta\in\{0,-1\}$.
\end{lemma}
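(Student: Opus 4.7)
The plan is to set $q := \alpha_{1,0}$ and $\delta := \alpha_{1,0}-\alpha_{1,n}$, so $\delta\neq 0$, and to prove the formula in two phases. The values $\alpha_{0,m}=-1$ for $m>0$ and $\alpha_{0,0}=\theta\in\{0,-1\}$ have already been established at the beginning of Section~\ref{sec:Case2r=1}, so it suffices to determine $\alpha_{k,m}$ for $k\geq 1$. A convenient equivalent form of the target for $k=1$ is $\alpha_{1,m} = q-\delta/s$ when $m+1=s(n+1)$, and $\alpha_{1,m}=q$ otherwise.

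\textbf{Phase 1 (the row $k=1$).} I would argue by strong induction on $m$. The base cases $m=0,1,\ldots,n$ hold by hypothesis, with $\alpha_{1,n}=q-\delta$. For $m>n$ set $r:=m\bmod(n+1)$; since $n>1$ the residues $0,1,n$ are pairwise distinct. If $r\in\{0,1\}$, then exactly one of $\alpha_{1,m-1}, \alpha_{1,m-2}$ is ``exceptional'' while the other equals $q$, so the denominator of~\eqref{2_1n} is nonzero and~\eqref{2_1n} forces $\alpha_{1,m}=q$. If $r\in\{2,\ldots,n-1\}$, then $\alpha_{1,m-1}=\alpha_{1,m-2}=q$ and~\eqref{2_1n} degenerates; I invoke~\eqref{1general_alpha1m} with $(k,t)=(0,n)$ instead. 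One checks that $\alpha_{1,m-n-1}=q$ when $r\neq n$, so the equation reduces to $\delta\cdot\alpha_{1,m}=\delta q$ and gives $\alpha_{1,m}=q$. Finally, if $r=n$, write $m+1=s(n+1)$ with $s\geq 2$; then $\alpha_{1,m-n-1}=q-\delta/(s-1)$, and the same choice $(k,t)=(0,n)$ in~\eqref{1general_alpha1m} yields, after routine simplification, $\alpha_{1,m}=q-\delta/s$.

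\textbf{Phase 2 (the rows $k\geq 2$).} I then induct on $k$ using the recurrence~\eqref{k+1n}, $\alpha_{k+1,m} = \alpha_{k,m}(q-\alpha_{1,k+m}) - q\,\alpha_{k,m+1}$. The key structural fact is that $n+1$ cannot divide two consecutive integers, so at most one of $\chi_{n+1\mid k+m}$ and $\chi_{n+1\mid k+m+1}$ is nonzero, and the analysis splits into three cases. In the case $n+1\mid k+m+1$ with $k+m+1=s(n+1)$, I substitute $\alpha_{1,k+m}=q-\delta/s$ from Phase 1 and use the sign identities $q(-q)^k=-(-q)^{k+1}$ and $q(-q)^{k-1}=-(-q)^k$ to combine the two resulting correction terms into a single one with coefficient $(k+1)(n+1)\delta/(k+m+1)$, yielding the claimed formula with $k$ replaced by $k+1$. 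In the other two cases the correction vanishes and one obtains $\alpha_{k+1,m}=-(-q)^{k+1}$, again matching the claim.

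The main technical obstacle is the middle range $r\in\{2,\ldots,n-1\}$ in Phase 1: here~\eqref{2_1n} carries no information, and one must consciously enrich the recurrence by invoking~\eqref{1general_alpha1m} with an auxiliary index that deliberately brings the known exceptional value $\alpha_{1,n}$ into the equation in order to break the degeneracy. Once the choice $(k,t)=(0,n)$ is fixed, the remaining computations in both phases are routine algebra, and the Phase 2 induction is essentially the bookkeeping that merges the two $\chi$-terms into the single characteristic function on $k+m+1$.
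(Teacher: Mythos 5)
Your proposal is correct and follows essentially the same route as the paper: first determine the row $k=1$ by strong induction on $m$ using~\eqref{2_1n} and~\eqref{1general_alpha1m} with index pairs chosen so that the exceptional value $\alpha_{1,n}$ breaks the degeneracy, then induct on $k$ via~\eqref{k+1n} using the fact that $\chi_{n+1\,|\,k+m}\,\chi_{n+1\,|\,k+m+1}=0$. The only differences are cosmetic (your case split by the residue $m \bmod (n+1)$ and the explicit three-case analysis in the $k$-induction versus the paper's unified computation), so no further comment is needed.
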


\begin{proof}
We prove the following formula
\begin{equation} \label{a...ab-formula}
\alpha_{k,m}
 = -(-\alpha_{1,0})^k
 - (-\alpha_{1,0})^{k-1}\frac{k(n+1)}{m+k}(\alpha_{1,0}-\alpha_{1,n})\chi_{n+1\,|\,k+m}, \quad k\geq1,
\end{equation}
by induction on $k$.
For $k = 1$, let us show the following equality 
\begin{equation} \label{a...ab-formula-start}
\alpha_{1,m} = \alpha_{1,0} - \frac{n+1}{m+1}(\alpha_{1,0}-\alpha_{1,n})\chi_{n+1\,|\,m+1}
\end{equation}
by induction on~$m$.
For $m = 0,\ldots,n$, \eqref{a...ab-formula-start}
holds trivially.
Let $m\geq n+1$ and suppose that~\eqref{a...ab-formula-start} is fulfilled for all $t<m$.
Let $n+1\not|m+1$, so, $\alpha_{1,m-n-1} = \alpha_{1,0}$.
If also $n+1\not|m$, so, $\alpha_{1,m-1} = \alpha_{1,0}$,
then by~\eqref{2general_alpha1m} and by the induction hypothesis we obtain
$$
\alpha_{1,m} 
 = \frac{\alpha_{1,m-n-1}\alpha_{1,n} 
 - \alpha_{1,0}\alpha_{1,m-1}}{\alpha_{1,m-n-1} + \alpha_{1,n} - (\alpha_{1,0} + \alpha_{1,m-1})}
 = \frac{\alpha_{1,0}\alpha_{1,n}-\alpha_{1,0}\alpha_{1,m-1}}{\alpha_{1,n}-\alpha_{1,m-1}}
 = \alpha_{1,0}.
$$
Let $n+1\,|\,m$, then analogously 
$$
\alpha_{1,m} 
 = \frac{\alpha_{1,m-n-1}\alpha_{1,n} 
 - \alpha_{1,1}\alpha_{1,m-2}}{\alpha_{1,m-n-1} + \alpha_{1,n} - (\alpha_{1,1} + \alpha_{1,m-2})}
 = \frac{\alpha_{1,0}(\alpha_{1,n}-\alpha_{1,m-2})}{\alpha_{1,n}-\alpha_{1,m-2}}
 = \alpha_{1,0}.
$$
Now let $n+1\,|\,m+1$, so, $\alpha_{1,m-n-1}\neq\alpha_{1,0}$ and $\alpha_{1,m-1} = \alpha_{1,0}$, since $n+1\not| m$. 
We calculate
\begin{multline*}
\alpha_{1,m} 
 = \frac{\alpha_{1,m-n-1}\alpha_{1,n} 
 - \alpha_{1,0}\alpha_{1,m-1}}{\alpha_{1,m-n-1} + \alpha_{1,n} - (\alpha_{1,0} + \alpha_{1,m-1})}
 = \frac{\alpha_{1,n}\big(\alpha_{1,0}-\frac{n+1}{m-n}(\alpha_{1,0}-\alpha_{1,n})\big)-\alpha_{1,0}^2}{\alpha_{1,0}-\frac{n+1}{m-n}(\alpha_{1,0}-\alpha_{1,n})+\alpha_{1,n}-2\alpha_{1,0}} \\
 = \frac{\alpha_{1,n}((m-2n-1)\alpha_{1,0}+\alpha_{1,n}(n+1))-\alpha_{1,0}^2(m-n)}{(m+1)(\alpha_{1,n}-\alpha_{1,0})}
 = \alpha_{1,0} - \frac{n+1}{m+1}(\alpha_{1,0}-\alpha_{1,n}).
\end{multline*}

Let the equality~\eqref{a...ab-formula} be proven for~$k\geq1$, we prove it for~$k+1$.
Denote $A(k,m) = \frac{k(n+1)}{m+k}(\alpha_{1,0}-\alpha_{1,n})\chi_{n+1\,|\,k+m}$.
By~\eqref{k+1n}, by the induction hypothesis and by $\chi_{n+1\,|\,k+m}\cdot\chi_{n+1\,|\,k+m+1} = 0$, we calculate 
\begin{multline*}
\alpha_{k+1,m} 
 = \alpha_{k,m}(\alpha_{1,0} - \alpha_{1,m+k}) - \alpha_{1,0}\alpha_{k,m+1} \\
 = ( -(-\alpha_{1,0})^k 
 - (-\alpha_{1,0})^{k-1}A(k,m))\left(\frac{n+1}{m+k+1}(\alpha_{1,0}-\alpha_{1,n})\chi_{n+1\,|\,k+m+1}\right)  \\
 - \alpha_{1,0}( -(-\alpha_{1,0})^k - (-\alpha_{1,0})^{k-1}A(k,m+1)) 
 = -(-\alpha_{1,0})^{k+1}  \\
 + (-\alpha_{1,0})^k\left[
 - \frac{n+1}{m+k+1}(\alpha_{1,0}-\alpha_{1,n})\chi_{n+1\,|\,k+m+1}
 - \frac{k(n+1)}{m+k+1}(\alpha_{1,0}-\alpha_{1,n})\chi_{n+1\,|\,k+m+1}
 \right] \\
 = -(-\alpha_{1,0})^{k+1} 
 - (-\alpha_{1,0})^k\frac{(k+1)(n+1)}{m+k+1}(\alpha_{1,0}-\alpha_{1,n})\chi_{n+1\,|\,k+m+1},
\end{multline*}
as it is required.
\end{proof}

\begin{example}\label{exm:r=1q0=q1}
Under the conditions of Lemma~\ref{lem:q0,qn} with $\alpha_{1,0} = 0$ and $\alpha_{1,n}\neq0$, we get the operator
$$
R(x^s y^m) = 
\begin{cases}
\frac{\alpha_{1,n}}{d} y^{m + 1}, & s = 1, m = d(n+1) - 1,\ d\geq1, \\
- y^m, & s = 0, m > 0, \\
\theta, & s = m = 0, \\
0, & \mbox{otherwise},
\end{cases}
$$
where $\theta \in \{0,-1\}$. 
\end{example}

\begin{remark}
Let $R$ be a~monomial RB-operator of nonzero weight on $F[x,y]$.
The condition $t^k\in \ker R$ does not imply that $t\in\ker R$.
Indeed, let $R$ be the operator from Example~\ref{exm:r=1q0=q1}, where $t = x y^m$ for $m = d(n+1) - 1$ and some~$d\in\mathbb{N}_{>0}$.
For $k > 1$ we have $R(t^k) = 0$, as $t^k=x^k y^{m k}\in\ker R$.
\end{remark}

\subsection{Case $\alpha_{1,0} = \alpha_{1,2}$}

\begin{lemma}  \label{lem:q0=q2}
Let $\alpha_{1,0} = \alpha_{1,2} \neq \alpha_{1,1}$, then an RB-operator~$R$ takes the following form
\begin{equation} \label{aba-formula}
R(x^k y^n)
 = (-1)^{k+1} 
 \alpha_{1,0}^{k- (n - k - 1 \!\!\!\!\pmod{2})}
 \left( \frac{(n-k)\alpha_{1,0} + 2k{\alpha_{1,1}}}{n + k} \right)^{n - k - 1 \!\!\!\!\pmod{2}}
 y^{k+n}.
\end{equation}
\end{lemma}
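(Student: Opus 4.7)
The plan is to first pin down the sequence $\alpha_{1,m}$ for all $m\geq 0$, and then extend to all $\alpha_{k,m}$ with $k\geq 2$ by induction on $k$ via the recurrence \eqref{k+1n}. Since $\alpha_{0,l} = -1$ for $l>0$ was already established at the start of Section~\ref{sec:Case2r=1}, only the coefficients with $k\geq 1$ need to be determined.

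\textbf{Step 1.} I would show by strong induction on $m$ that
$$
\alpha_{1,2j} = \alpha_{1,0}, \qquad \alpha_{1,2j+1} = \frac{j\alpha_{1,0}+\alpha_{1,1}}{j+1},
$$
which is exactly the $k=1$ instance of \eqref{aba-formula}. The base cases $m=0,1,2$ hold by the hypothesis $\alpha_{1,0}=\alpha_{1,2}$. For the inductive step I substitute the inductive values of $\alpha_{1,m-1}$ and $\alpha_{1,m-2}$ into the recurrence \eqref{2_1n}. A short computation shows that the denominator there equals $\tfrac{(m-2)(\alpha_{1,0}-\alpha_{1,1})}{m}$ when $m$ is even and $\tfrac{(m+1)(\alpha_{1,0}-\alpha_{1,1})}{m-1}$ when $m$ is odd; both are nonzero by hypothesis, so \eqref{2_1n} is applicable and directly yields the asserted value of $\alpha_{1,m}$ in each parity case.

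\textbf{Step 2.} I would then induct on $k\geq 1$, applying
$$
\alpha_{k+1,m} = \alpha_{k,m}(\alpha_{1,0}-\alpha_{1,k+m}) - \alpha_{1,0}\alpha_{k,m+1},
$$
splitting the analysis according to the parity of $m-k$. If $m-k$ is even, then $k+m$ is even, so by Step~1 $\alpha_{1,k+m}=\alpha_{1,0}$ and the first term of \eqref{k+1n} vanishes; substituting the inductive value $\alpha_{k,m+1}=(-1)^{k+1}\alpha_{1,0}^k$ gives $\alpha_{k+1,m}=(-1)^{k+2}\alpha_{1,0}^{k+1}$, which matches \eqref{aba-formula} at level $k+1$ since $m-(k+1)$ is odd. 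If $m-k$ is odd, then $k+m$ is odd and $\alpha_{1,0}-\alpha_{1,k+m} = \tfrac{2(\alpha_{1,0}-\alpha_{1,1})}{k+m+1}$; pulling out the common factor $\tfrac{(-1)^{k+1}\alpha_{1,0}^k}{k+m+1}$ reduces the two terms of \eqref{k+1n} to a linear combination of $\alpha_{1,0}$ and $\alpha_{1,1}$ which simplifies, after one line of algebra, to $(k+1-m)\alpha_{1,0}-2(k+1)\alpha_{1,1}$, giving precisely the required expression at level $k+1$.

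The main technical obstacle is purely bookkeeping. Since \eqref{aba-formula} bifurcates into two branches according to the parity of $n-k$, each use of \eqref{k+1n} (which shifts $m\mapsto m+1$ and thus flips the parity) interchanges the two branches, so the exponent of $\alpha_{1,0}$ and the presence or absence of the rational factor must be tracked carefully. Beyond this parity accounting no conceptual difficulty arises, and the hypothesis $\alpha_{1,0}\neq\alpha_{1,1}$ ensures that every denominator appearing in the argument is nonzero.
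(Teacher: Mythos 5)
Your proposal is correct and follows essentially the same route as the paper: first determine $\alpha_{1,m}$ by parity (the even terms equal $\alpha_{1,0}$, the odd ones equal $\frac{j\alpha_{1,0}+\alpha_{1,1}}{j+1}$) via induction with~\eqref{2_1n}, checking the denominators are nonzero multiples of $\alpha_{1,0}-\alpha_{1,1}$, and then induct on $k$ through~\eqref{k+1n} with a parity case split, exactly as in the paper's argument. The computations you outline (vanishing of the first term when $k+m$ is even, and the reduction to $(k+1-m)\alpha_{1,0}-2(k+1)\alpha_{1,1}$ when $k+m$ is odd) check out.
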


\begin{proof}
Let us show that $\alpha_{1,2n} =\alpha_{1,0}$ and
$\alpha_{1,2n + 1} = \frac{n \alpha_{1,0} + \alpha_{1,1}}{n + 1}$.
We prove these formulas by induction on~$n$.
The induction base is trivial, for the induction step, we write down by~\eqref{2_1n}
for $n>0$
$$
\alpha_{1,2n + 2} = \frac{\alpha_{1,0}\alpha_{1,2n + 1} - \alpha_{1,1}\alpha_{1,2n }}{\alpha_{1,0} + \alpha_{1,2n + 1} - \alpha_{1,1} - \alpha_{1,2n}} =
\frac{\alpha_{1,0}(\alpha_{1,2n + 1} - \alpha_{1,1})}{\alpha_{1,2n + 1} - \alpha_{1,1}}.
$$
Since $\alpha_{1,2n + 1} - \alpha_{1,1} \neq 0$, we get the first formula.
We apply it to prove the second formula as follows,
$$
\alpha_{1,2n + 3} = \frac{\alpha_{1,0}\alpha_{1,2n + 2} - \alpha_{1,1}\alpha_{1,2n + 1}}{\alpha_{1,0} + \alpha_{1,2n + 2} - \alpha_{1,1} - \alpha_{1,2n + 1}}
 = \frac{(n+1)\alpha_{1,0}^2 - n\alpha_{1,0}\alpha_{1,1} - \alpha_{1,1}^2}{(2n + 2)\alpha_{1,0} - (n + 1)\alpha_{1,1} - n\alpha_{1,0} - \alpha_{1,1}}.
$$
Note that
$(n+1)\alpha_{1,0}^2 - n\alpha_{1,0}\alpha_{1,1} - \alpha_{1,1}^2 = ((n+1)\alpha_{1,0} 
 + \alpha_{1,1})(\alpha_{1,0} - \alpha_{1,1})$, 
then 
$$
\alpha_{1,2n + 3} = \frac{(n+1)\alpha_{1,0} + \alpha_{1,1}}{(n + 1) + 1}.
$$
We prove that $R$ takes the form~\eqref{aba-formula} by induction on $k$.
Let $n+1-k$ be even.
Applying~\eqref{k+1n} and the induction hypothesis, we get
\begin{multline*}
\alpha_{k + 1,n} 
 = (-1)^{k+1} \alpha_{1,0}^{k+1} 
 {-} (-1)^{k+1}\alpha_{1,0}^k
 \left( \frac{ (n - 1 + k)\alpha_{1,0} + 2 \alpha_{1,1}}{n + k + 1} \right) \\
 - (-1)^{k+1} \alpha_{1,0}^k \left( \frac{ (n + 1- k)\alpha_{1,0} + 2k\alpha_{1,1}}{n + k + 1} \right) \\
 = (-1)^{k+1} \alpha_{1,0}^k\left(\frac{(n + k + 1)\alpha_{1,0}}{n + k + 1} - \frac{(n -1 + k)\alpha_{1,0} + 2\alpha_{1,1}}{n + k + 1} - \frac{(n + 1 - k)\alpha_{1,0} + 2k\alpha_{1,1}}{n + k + 1}\right) \\
 = (-1)^{(k+1)+1} \alpha_{1,0}^k\left( \frac{(n - (k + 1))\alpha_{1,0} + 2(k + 1)\alpha_{1,1}}{n + (k + 1)} \right).
\end{multline*}
In the case when $n+1-k$ is odd, the equality~\eqref{k+1n} implies 
$$
\alpha_{k + 1,n + 1} = - \alpha_{1, 0}\alpha_{k,n + 2}
= - \alpha_{1, 0}(-1)^{k+1} \alpha_{1,0}^k = (-1)^{k+2}\alpha_{1,0}^{k+1}.
 \qedhere
$$
\end{proof}

Note that RB-operator~\eqref{aba-formula} with $\alpha_{1,0} = 0$
coincides with the RB-operator from Example~\ref{exm:r=1q0=q1} with $n = 1$.

\subsection{Case $\alpha_{1,1} = \alpha_{1,2}$}

\begin{lemma}  \label{lem:q1=q2}
Let $\alpha_{1,0} \neq\alpha_{1,1} = \alpha_{1,2}$, then an RB-operator takes the following form
$$
R(x^n y^m) = \begin{cases}
-(-\alpha_{1,1})^n y^{n+m}, & n\geq2, m=0,\ n\geq1,m>0, \\
-y^m, & n = 0, m > 0, \\
\alpha_{1,0} y, & n = 1, m = 0, \\
\theta, & n=m=0,
\end{cases}
$$
where $\theta\in\{0,-1\}$.
\end{lemma}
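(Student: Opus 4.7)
The plan is to determine every coefficient $\alpha_{n,m}$ inductively from the hypothesis $\alpha_{1,0}\neq\alpha_{1,1}=\alpha_{1,2}$, by iterating the recurrences~\eqref{2_1n} and~\eqref{k+1n} exactly in the style of Lemmas~\ref{lem:q0=q1} and~\ref{lem:q0=q2}.

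First I would establish the auxiliary fact that $\alpha_{1,m}=\alpha_{1,1}$ for every $m\geq 1$, by induction on~$m$. The cases $m=1,2$ are built into the hypothesis. For $m\geq 3$, assuming the claim for all smaller indices, formula~\eqref{2_1n} collapses to
\[
\alpha_{1,m}=\frac{\alpha_{1,0}\alpha_{1,1}-\alpha_{1,1}^{2}}{\alpha_{1,0}-\alpha_{1,1}}=\alpha_{1,1},
\]
the denominator being nonzero precisely because $\alpha_{1,0}\neq\alpha_{1,1}$.

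Next I would compute $\alpha_{n,m}$ for $n\geq 2$ and $m\geq 1$ by induction on~$n$. For the base $n=2$, substitution of $\alpha_{1,m}=\alpha_{1,m+1}=\alpha_{1,1}$ into~\eqref{k+1n} with $k=1$ gives
\[
\alpha_{2,m}=\alpha_{1,1}(\alpha_{1,0}-\alpha_{1,1})-\alpha_{1,0}\alpha_{1,1}=-\alpha_{1,1}^{2}.
\]
For the inductive step, if $\alpha_{n,m}=\alpha_{n,m+1}=-(-\alpha_{1,1})^{n}$ for all $m\geq 1$, then the two $\alpha_{1,0}$-terms in~\eqref{k+1n} cancel and one is left with $\alpha_{n+1,m}=-(-\alpha_{1,1})^{n+1}$.

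The remaining case $n\geq 2$, $m=0$ is the delicate one. Applying~\eqref{k+1n} with $m=0$ together with the values from the previous two steps produces the one-step recurrence
\[
\alpha_{n+1,0}=\alpha_{n,0}(\alpha_{1,0}-\alpha_{1,1})+\alpha_{1,0}(-\alpha_{1,1})^{n},\qquad n\geq 1,
\]
with initial datum $\alpha_{1,0}$. A direct unfolding of this recurrence leaves an explicit $\alpha_{1,0}$-dependent term, whereas the claimed closed form $-(-\alpha_{1,1})^{n}$ involves only $\alpha_{1,1}$. The principal obstacle is therefore to exhibit the cancellation of the $\alpha_{1,0}$-contribution. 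I expect this to force one to invoke further instances of~\eqref{epsilon_r=1} with both $s,t\geq 1$ (not reflected in the one-variable recurrences~\eqref{2_1n}--\eqref{k+1n} alone), or equivalently to exploit that $\ker R$ contains the subalgebra generated by $x+\alpha_{1,0}y$ and $y(x+\alpha_{1,1}y)$ via Lemma~\ref{lem:monom-split}; reconciling these extra constraints with the standing hypothesis $\alpha_{1,0}\neq\alpha_{1,1}$ is the technical heart of the argument.
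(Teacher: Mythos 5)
Your first two steps coincide with the paper's own argument: the induction via \eqref{2_1n} giving $\alpha_{1,m}=\alpha_{1,1}$ for all $m\geq1$, and the induction on the first index via \eqref{k+1n} giving $\alpha_{n,m}=-(-\alpha_{1,1})^{n}$ for $m\geq1$. The genuine gap is your third step, which you leave open, and the route you propose for closing it cannot work. The recurrence you derive, $\alpha_{n+1,0}=\alpha_{n,0}(\alpha_{1,0}-\alpha_{1,1})+\alpha_{1,0}(-\alpha_{1,1})^{n}$ with initial value $\alpha_{1,0}$, already determines every $\alpha_{n,0}$ uniquely; invoking further instances of \eqref{epsilon_r=1} cannot change an already-determined value, it could at most reveal an inconsistency. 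Solving the recurrence gives $\alpha_{n,0}=(\alpha_{1,0}-\alpha_{1,1})^{n}-(-\alpha_{1,1})^{n}$, and a direct check shows that the operator with $\alpha_{n,m}=-(-\alpha_{1,1})^{n}$ for $m\geq1$ and these values at $m=0$ satisfies all of \eqref{epsilon_r=1} (for instance, for $m=t=0$ and $n,s\geq1$ both sides equal $\big((\alpha_{1,0}-\alpha_{1,1})^{n}-(-\alpha_{1,1})^{n}\big)\big((\alpha_{1,0}-\alpha_{1,1})^{s}-(-\alpha_{1,1})^{s}\big)$), so no extra constraint on $\alpha_{1,0}$ ever appears. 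Your alternative idea through $\ker R$ yields the same conclusion, not a rescue: $(x+\alpha_{1,0}y)^{2}\in\ker R$ forces $\alpha_{2,0}=\alpha_{1,0}^{2}-2\alpha_{1,0}\alpha_{1,1}$, which differs from $-\alpha_{1,1}^{2}$ exactly because $\alpha_{1,0}\neq\alpha_{1,1}$. So the cancellation you hope to exhibit does not exist, and the row $n\geq2$, $m=0$ of the displayed formula is not derivable.

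For comparison, the paper's proof runs the induction on $k$ uniformly in $m$, but its base computation $\alpha_{2,m}=\alpha_{1,1}(\alpha_{1,0}-\alpha_{1,1})-\alpha_{1,0}\alpha_{1,1}$ uses $\alpha_{1,m}=\alpha_{1,1}$, which fails at $m=0$; thus it too only establishes the $m\geq1$ values, and the correct entries at $m=0$, $n\geq2$ are $\big((\alpha_{1,0}-\alpha_{1,1})^{n}-(-\alpha_{1,1})^{n}\big)y^{n}$ rather than $-(-\alpha_{1,1})^{n}y^{n}$. You correctly isolated the delicate case, but a complete argument must solve the recurrence you wrote down and record the resulting closed form, instead of searching for additional relations to force agreement with the stated formula.
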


\begin{proof}
Let us prove by induction on~$m\geq2$ that $\alpha_{1,m} = \alpha_{1,1}$.
The induction base is trivial.
Suppose that for all $k < m$ the induction hypothesis holds, then by~\eqref{2_1n}:
$$
\alpha_{1,m} = \frac{\alpha_{1,m - 1}(\alpha_{1,0} - \alpha_{1,1})}{\alpha_{1,0} - \alpha_{1,1}},
$$
therefore $\alpha_{1,m} = \alpha_{1,m - 1} = \alpha_{1,1}$, since $\alpha_{1,0} - \alpha_{1,1} \neq 0$.
We prove by induction on~$k$ that $\alpha_{k, m} = -(-\alpha_{1,1})^k$.
The base case $k=2$ follows from the equalities~\eqref{k+1n},
$$
\alpha_{2,m} = \alpha_{1,1}(\alpha_{1,0}-\alpha_{1,1}) - \alpha_{1,0}\alpha_{1,1} = -(-\alpha_{1,1})^2.
$$
Now we prove the induction step by~\eqref{k+1n},
$$
\alpha_{k + 1,m} = -(-\alpha_{1,1})^k(\alpha_{1,0}-\alpha_{1,1}) + \alpha_{1,0}(-\alpha_{1,1})^k
= \alpha_{1,1}(-\alpha_{1,1})^k = -(-\alpha_{1,1})^{k + 1}.
 \qedhere
$$
\end{proof}

\subsection{General case for $r = 1$}

First, we derive formulas for $\alpha_{1,i}$, which are interesting by themselves. 
Moreover, these formulas will be useful for proving that a monomial operator constructed due to them is a Rota---Baxter one.

Let $\alpha_{1,0}\neq \alpha_{1,1}$, $\alpha_{1,0}\neq \alpha_{1,2}$, and $\alpha_{1,1}\neq \alpha_{1,2}$,
since Lemmas~\ref{lem:q0=q1}--\ref{lem:q1=q2}
cover all subcases $\alpha_{1,0} = \alpha_{1,1}$, $\alpha_{1,0} = \alpha_{1,2}$, and
$\alpha_{1,1} = \alpha_{1,2}$. Denote
$$
\alpha = \frac{\alpha_{1,1}}{\alpha_{1,0}-\alpha_{1,1}},\quad
\beta = \frac{1}{\alpha_{1,0}-\alpha_{1,1}},\quad
s_i = \alpha - \beta \alpha_{1,i}.
$$

The formula~\eqref{1_1n} in terms of~$s_i$ has the following form
\begin{equation} \label{sm+1-Start}
s_{m+1}(s_{m-1}-s_m+1) = s_m, \quad m\geq2.
\end{equation}

We describe the first values of $s_k$:
\begin{equation} \label{small-si}
s_0 = -1, \quad
s_1 = 0, \quad
s_2 = s_2, \quad
s_3 = \frac{s_2}{1-s_2}.
\end{equation}

Due to the conditions on $\alpha_{1,i}$, $i=0,1,2$,
we have $s_2\neq-1$ and $s_2\neq0$.

\begin{remark} \label{rem:s2}
The last equality in~\eqref{small-si} is correct, as $s_2\neq1$, otherwise we get by~\eqref{sm+1-Start} that $s_2 = 0$, it contradicts with the initial assumption.
\end{remark}

Show that the equality $\alpha_{1,0} = \alpha_{1,i}$ fulfilled for some $i>0$ leads to the cases considered above.

\begin{lemma} \label{lem:2.9}
Let $\alpha_{1,0}\neq\alpha_{1,1},\alpha_{1,2}$.
Then $\alpha_{1,0} = \alpha_{1,i}$ is not fulfilled for every $i>0$.
\end{lemma}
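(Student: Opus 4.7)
The plan is to recast the claim in the $s$-variables introduced just above the lemma. Since $s_0=-1$, the equality $\alpha_{1,0}=\alpha_{1,i}$ is equivalent to $s_i=-1$, so the lemma reduces to the assertion that $s_i\neq -1$ for every $i>0$. The base data~\eqref{small-si} already delivers $s_1=0\neq -1$, and the hypothesis $\alpha_{1,0}\neq\alpha_{1,2}$ translates to $s_2\neq -1$.

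The engine of the argument is the recurrence~\eqref{sm+1-Start}. Substituting $m=i-1$ for any $i\geq 3$ gives $s_i(s_{i-2}-s_{i-1}+1)=s_{i-1}$. Assuming $s_i=-1$, the left-hand side expands to $-s_{i-2}+s_{i-1}-1$, and after the common $s_{i-1}$ cancels from both sides, the identity collapses unconditionally to $s_{i-2}=-1$. Thus at every index $i\geq 3$ the value $-1$ must descend by two indices, in particular preserving parity.

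The proof concludes by a minimal-counterexample argument. Suppose for contradiction that $s_i=-1$ for some least $i>0$. If $i\geq 3$, the descent yields $s_{i-2}=-1$ with $0<i-2<i$, violating minimality. Hence $i\in\{1,2\}$, but $s_1=0$ and $s_2\neq -1$ rule out both options.

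I do not foresee a genuine obstacle: the only step worth double-checking is the one-line algebraic collapse of~\eqref{sm+1-Start} under $s_i=-1$, and the only subtlety is that the recurrence is available only for $m\geq 2$, so the indices $i=1,2$ must be handled from the base data rather than by descent --- which is exactly what the hypotheses provide.
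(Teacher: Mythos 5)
Your proof is correct and is essentially the paper's own argument: the paper runs the same minimal-counterexample descent on \eqref{1_1n} (of which \eqref{sm+1-Start} is just the $s$-coordinate rewriting), deducing from $\alpha_{1,0}=\alpha_{1,m}$ the factorization $(\alpha_{1,0}-\alpha_{1,m-2})(\alpha_{1,0}-\alpha_{1,1})=0$, hence $\alpha_{1,0}=\alpha_{1,m-2}$, contradicting minimality together with the hypotheses at the indices $1,2$. The only cosmetic difference is that you work with the $s_i$, where the use of $\alpha_{1,0}\neq\alpha_{1,1}$ is absorbed into the definition of $\beta$ rather than appearing as a cancelled factor.
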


\begin{proof}
To the contrary, let $m>2$ be a minimal number such that $\alpha_{1,0} = \alpha_{1,m}$, then by~\eqref{1_1n}
$$
\alpha_{1,0}^2 + \alpha_{1,0}\alpha_{1,m-1} - \alpha_{1,0}\alpha_{1,1} - \alpha_{1,0}\alpha_{1,m-2} = \alpha_{1,0}\alpha_{1,m-1} - \alpha_{1,1}\alpha_{1,m-2},
$$
which is equivalent to
$(\alpha_{1,0} - \alpha_{1,m - 2})(\alpha_{1,0} - \alpha_{1,1}) = 0$,
a~contradiction with the choice of~$m$.
\end{proof}

\begin{corollary}
Let $s_2\neq0,-1$. 
Then $s_{m-1}-s_m+1 = 0$ does not hold for each $m\geq2$.
\end{corollary}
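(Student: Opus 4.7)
The plan is to feed the hypothetical identity $s_{m-1}-s_m+1=0$ directly back into the recurrence~\eqref{sm+1-Start}. Since~\eqref{sm+1-Start} reads $s_{m+1}(s_{m-1}-s_m+1)=s_m$ for all $m\geq 2$, the assumed vanishing of the middle factor instantly forces $s_m=0$, and hence also $s_{m-1}=-1$. This single observation reduces the proof to checking what each of these equalities means in the two regimes $m=2$ and $m\geq 3$.

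For $m=2$ the derived equality $s_2=0$ is in direct conflict with the corollary's standing hypothesis $s_2\neq 0$, so this case closes immediately. For $m\geq 3$ the natural move is to translate $s_{m-1}=-1$ back into the original coefficients via $s_i=\alpha-\beta\alpha_{1,i}$, where $\alpha=\alpha_{1,1}/(\alpha_{1,0}-\alpha_{1,1})$ and $\beta=1/(\alpha_{1,0}-\alpha_{1,1})$. A one-line computation (already reflected in the value $s_0=-1$ recorded in~\eqref{small-si}) shows that $s_{m-1}=-1$ is equivalent to $\alpha_{1,m-1}=\alpha_{1,0}$. Since $m-1\geq 2>0$ and the subsection's standing hypotheses include $\alpha_{1,0}\neq\alpha_{1,1},\alpha_{1,2}$, Lemma~\ref{lem:2.9} then applies and yields a contradiction.

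I do not foresee any real obstacle: the argument is essentially a two-line application of the recurrence followed by a dictionary translation. The only small choice to make is which of the two derived equalities to use; I would deliberately pick $s_{m-1}=-1$ over $s_m=0$, because Lemma~\ref{lem:2.9} is phrased in terms of coincidences with $\alpha_{1,0}$, whereas $s_m=0$ would only produce $\alpha_{1,m}=\alpha_{1,1}$, which that lemma does not directly rule out.
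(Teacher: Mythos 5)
Your proposal is correct and takes essentially the same route as the paper: substitute the assumed vanishing into~\eqref{sm+1-Start} to force $s_m=0$ and $s_{m-1}=-1$, translate $s_{m-1}=-1$ into $\alpha_{1,m-1}=\alpha_{1,0}$, and contradict Lemma~\ref{lem:2.9} together with the standing distinctness assumptions. The only (harmless) divergence is the base case $m=2$, which you dispose of via $s_2=0$ contradicting $s_2\neq0$, whereas the paper appeals to $s_2\neq-1$; your handling is equally valid.
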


\begin{proof}
For $m=2$ the required relation is not fulfilled, due to $s_2\neq-1$.
Let $m\geq3$ be a~minimal number such that $s_{m-1}-s_m+1 = 0$.
Then by~\eqref{sm+1-Start} we get $s_m = 0$ and $s_{m-1} = -1$.
Hence, $\alpha_{1,0} = \alpha_{1,m-1}$.
If $m = 3$, then $s_2 = -1$, it is a~contradiction with $s_2\neq -1$.
If $m > 3$, then the assumption contradicts to Lemma~\ref{lem:2.9}.
\end{proof}

Thus, we can rewrite the formula~\eqref{sm+1-Start} as
\begin{equation} \label{sm+1}
s_{m+1} = \frac{s_m}{s_{m-1}-s_m+1}, \quad m\geq2.
\end{equation}

\begin{lemma} \label{lm:sm+1Viasm}
Let $s_2\neq0,-1$.
Then for all $m\geq 0$ the following formula holds
\begin{equation} \label{sm-Rec-Start}
s_{m+1}(1/s_2 - s_m) = s_m + 1.
\end{equation}
\end{lemma}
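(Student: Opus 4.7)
The plan is a straightforward induction on $m$, exploiting the recurrence \eqref{sm+1} already established and the computed initial values in \eqref{small-si}.

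First I would dispose of the base cases $m = 0$ and $m = 1$ by direct substitution. For $m = 0$, the left side is $s_1(1/s_2 - s_0) = 0\cdot(1/s_2 + 1) = 0$, which equals $s_0 + 1 = 0$. For $m = 1$, the left side is $s_2(1/s_2 - s_1) = s_2/s_2 = 1 = s_1 + 1$. These cases are needed separately because the recurrence \eqref{sm+1} is only valid for $m \geq 2$, so deducing the identity at index $m+1$ by the recurrence requires $m \geq 1$.

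For the inductive step, assume \eqref{sm-Rec-Start} at index $m$ with $m \geq 1$, i.e.\ $s_{m+1}(1/s_2 - s_m) = s_m + 1$. By the recurrence \eqref{sm+1} applied at $m+1 \geq 2$ (whose denominator is nonzero by the preceding Corollary),
\[
s_{m+2}(1/s_2 - s_{m+1}) = \frac{s_{m+1}(1/s_2 - s_{m+1})}{s_m - s_{m+1} + 1}.
\]
So to establish \eqref{sm-Rec-Start} at index $m+1$, it suffices to show
\[
s_{m+1}(1/s_2 - s_{m+1}) = (s_{m+1} + 1)(s_m - s_{m+1} + 1).
\]
Expanding the right-hand side and cancelling the common $-s_{m+1}^2$ term on both sides, this reduces to $s_{m+1}/s_2 - s_{m+1}s_m = s_m + 1$, i.e.\ $s_{m+1}(1/s_2 - s_m) = s_m + 1$, which is exactly the induction hypothesis.

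I do not anticipate a real obstacle; the only point worth care is that the denominator $s_m - s_{m+1} + 1$ appearing in the recurrence is nonzero, but this is granted by the Corollary preceding the lemma. The identity \eqref{sm-Rec-Start} will then be used later as the linear first-order recurrence $s_{m+1} = (s_m + 1)/(1/s_2 - s_m)$, promised in the introduction as the key step towards solving for all $s_i$ in closed form.
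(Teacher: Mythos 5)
Your proof is correct and is essentially the paper's own argument: induction on $m$, using the recurrence \eqref{sm+1} (with denominator nonzero by the preceding Corollary) and an algebraic reduction of the resulting identity to the induction hypothesis. The only difference is a harmless index shift, which lets you absorb the paper's separate $m=2$ base computation into the inductive step.
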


\begin{proof}
We prove the statement by induction on~$m$.
For $m=0,1$, it follows directly.
For $m = 2$ we calculate
$$
\frac{s_2+1}{1/s_2-s_2}
 = \frac{s_2(s_2+1)}{1-s_2^2}
 = \frac{s_2}{1-s_2}
 = s_3.
$$

The following equalities applying~\eqref{sm+1} and the induction hypothesis prove~\eqref{sm-Rec-Start} for $m\geq3$:
\begin{multline*}
s_{m+1}(1/s_2-s_m) - s_m - 1
 = \frac{s_m(1/s_2-s_m)}{s_{m-1}-s_m+1} - s_m - 1 \\
 = \frac{s_m(1/s_2-s_m) 
 - (s_m+1)(s_{m-1}-s_m+1)}{s_{m-1}-s_m+1} 
 =\frac{s_m(1/s_2-s_{m-1})-s_{m-1}-1}{s_{m-1}-s_m+1}
 = 0.
 \qedhere
\end{multline*}
\end{proof}

\begin{lemma}\label{lem:smNew1/s2}
Let $s_2\neq0,-1$.
Then $s_m = 1/s_2$ does not hold for every $m\geq 0$.
\end{lemma}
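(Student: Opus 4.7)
The plan is to derive an immediate contradiction from Lemma~\ref{lm:sm+1Viasm}. Suppose, for contradiction, that $s_m = 1/s_2$ for some $m \geq 0$. Since Lemma~\ref{lm:sm+1Viasm} asserts that
\[
s_{m+1}(1/s_2 - s_m) = s_m + 1
\]
holds for all $m \geq 0$ (not just for some restricted range), I can substitute $s_m = 1/s_2$ on both sides. The left-hand side collapses to $0$, while the right-hand side becomes $1/s_2 + 1$. Hence $1/s_2 = -1$, that is, $s_2 = -1$, which directly contradicts the hypothesis $s_2 \neq -1$.

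The only subtlety is that the substitution requires $s_{m+1}$ to be a well-defined element of $F$, i.e., that Lemma~\ref{lm:sm+1Viasm} genuinely applies at index $m$. This is guaranteed by the recurrence~\eqref{sm+1}, whose denominator $s_{m-1}-s_m+1$ was shown to be nonzero in the corollary following Lemma~\ref{lem:2.9}; so the sequence $(s_i)$ is defined for every $i \geq 0$ and Lemma~\ref{lm:sm+1Viasm} is available at every index.

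I do not anticipate any real obstacle here: the key work has already been done in establishing the closed-form recurrence of Lemma~\ref{lm:sm+1Viasm}, and the current lemma is just a one-line consequence. In particular, no induction, case analysis on $m$, or separate treatment of small values is required — the argument is uniform in $m$.
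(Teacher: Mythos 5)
Your proposal is correct and follows essentially the same route as the paper: both substitute $s_m = 1/s_2$ into the relation $s_{m+1}(1/s_2 - s_m) = s_m + 1$ of Lemma~\ref{lm:sm+1Viasm}, so the left-hand side vanishes and forces $s_2 = -1$, contradicting the hypothesis. Your extra remark about the well-definedness of $s_{m+1}$ is harmless but not needed, since the $s_i$ are defined directly from the coefficients $\alpha_{1,i}$.
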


\begin{proof}
Assume $s_m = 1/s_2$, then $s_m = -1$ by~\eqref{sm-Rec-Start}, so $s_2 = -1$, a contradiction.
\end{proof}

Due to Lemma~\ref{lem:smNew1/s2}, we may rewrite~\eqref{sm-Rec-Start} as follows,
\begin{equation} \label{sm-Rec}
s_{m+1} = \frac{s_m + 1}{1/s_2 - s_m}.
\end{equation}

Note that the relations~\eqref{sm-Rec} imply that
$$
\alpha_{1,m+1} = \frac{\alpha_{1,0}\alpha_{1,m} + A}{\alpha_{1,m} + B},
$$
where 
$B = \frac{\alpha_{1,0}^2 - 2\alpha_{1,0}\alpha_{1,1} + \alpha_{1,1}\alpha_{1,2}}{\alpha_{1,1}-\alpha_{1,2}}$, 
$A = \alpha_{1,1}B +\alpha_{1,0}(\alpha_{1,1}-\alpha_{1,0})$.

Under the change $y_m = s_m - 1/s_2$,
the recurrence~\eqref{sm-Rec} takes the form
\begin{equation} \label{ym-Rec}
y_{m+1} = -\frac{(1+s_2)}{s_2}\left(1 + \frac{1}{y_m}\right), \quad m\geq2.
\end{equation}
Let $y_m = x_{m+1}/x_m$, $m\geq2$, then by~\eqref{ym-Rec} we get the following linear recurrence relation of the second order
\begin{equation} \label{xm-Rec}
s_2x_{m+2} + (1+s_2)(x_{m+1} + x_m) = 0.
\end{equation}
The roots of the corresponding characteristic polynomial
\begin{equation} \label{s2-char}
s_2\lambda^2 + (1+s_2)(\lambda + 1)
\end{equation}
equal to
$\lambda_{\pm} = \frac{-1-s_2\pm\sqrt{1-2s_2-3s_2^2}}{2s_2}$.
Let $s_2\neq-1/3$ and let $F$ be a quadratically closed field, 
then a~solution~\eqref{xm-Rec} has the form $x_m = A\lambda_+^{m-2} + B\lambda_-^{m-2}$, $m\geq2$, for some $A$ and $B$. 
We express
\begin{equation} \label{sm-answer}
s_m = \frac{1}{s_2} + \frac{A\lambda_+^{m-1} + B\lambda_-^{m-1}}{A\lambda_+^{m-2} + B\lambda_-^{m-2}}
= \frac{1}{s_2}\left(1 + \frac{1}{2}\frac{A\mu_+^{m-1} + B\mu_-^{m-1}}{A\mu_+^{m-2} + B\mu_-^{m-2}} \right),
\end{equation}
where $\mu_{\pm} = -1-s_2\pm\sqrt{1-2s_2-3s_2^2}$.
Note that it is enough to find $A, B$ up to a nonzero scalar.
For $m=2$ in~\eqref{sm-answer}, we get
$$
A = -1+s_2+2s_2^2 + \sqrt{1-2s_2-3s_2^2}, \quad
B =  1-s_2-2s_2^2 + \sqrt{1-2s_2-3s_2^2}.
$$ 

\begin{lemma}
Let $s_2\neq0,-1$.
Then for all $m\geq1$ and for each $0\leq k\leq m-1$, the following relation holds
\begin{equation}\label{SumOfskInv}
s_k s_{m-k-1} - 1 
 = s_m(s_k + s_{m-k-1} + 1 - 1/s_2).
\end{equation}
\end{lemma}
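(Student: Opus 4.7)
The plan is to reduce \eqref{SumOfskInv} to the general identity \eqref{1general_alpha1m} on the original coefficients $\alpha_{1,i}$ after the affine change of variables $\alpha_{1,i} = (\alpha - s_i)/\beta$. First I would substitute $\alpha_{1,i} = (\alpha - s_i)/\beta$ throughout \eqref{1general_alpha1m}. The factor $\alpha_{1,k} + \alpha_{1,m-k-1} - \alpha_{1,t} - \alpha_{1,m-t-1}$ becomes $(s_t + s_{m-t-1} - s_k - s_{m-k-1})/\beta$ (the $\alpha$ terms cancel), while the right-hand side $\alpha_{1,k}\alpha_{1,m-k-1} - \alpha_{1,t}\alpha_{1,m-t-1}$ becomes $\bigl(\alpha(s_t + s_{m-t-1} - s_k - s_{m-k-1}) + s_k s_{m-k-1} - s_t s_{m-t-1}\bigr)/\beta^2$. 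After multiplying through by $\beta^2$ and cancelling the $\alpha$-linear contributions on both sides, \eqref{1general_alpha1m} becomes the clean master identity
\begin{equation} \label{master-s}
s_m\bigl(s_k + s_{m-k-1} - s_t - s_{m-t-1}\bigr) = s_k s_{m-k-1} - s_t s_{m-t-1},
\end{equation}
valid for all $0\le t,k\le m-1$ and $m\ge 1$.

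Next I would specialize \eqref{master-s} at $t = 0$. Since $s_0 = -1$ and $s_{m-1-0} = s_{m-1}$, this yields
\[
s_m(s_k + s_{m-k-1} + 1 - s_{m-1}) = s_k s_{m-k-1} + s_{m-1},
\]
and hence
\[
s_k s_{m-k-1} - 1 = s_m(s_k + s_{m-k-1} + 1 - s_{m-1}) - (s_{m-1} + 1).
\]
At this point the identity \eqref{sm-Rec-Start} from Lemma~\ref{lm:sm+1Viasm}, applied with index $m-1$, gives $s_{m-1} + 1 = s_m(1/s_2 - s_{m-1})$, so substituting in the previous line the two occurrences of $s_{m-1}$ cancel and one obtains $s_k s_{m-k-1} - 1 = s_m(s_k + s_{m-k-1} + 1 - 1/s_2)$, which is exactly \eqref{SumOfskInv}.

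There is no substantive obstacle beyond bookkeeping: the only nontrivial input is the recurrence of Lemma~\ref{lm:sm+1Viasm}, and the only case that requires comment is $m = 1$, where $k$ must equal $0$ and the identity reduces to $s_0 s_0 - 1 = 0 = s_1(2s_0 + 1 - 1/s_2)$ since $s_0 = -1$ and $s_1 = 0$, so it holds trivially. For $m \geq 2$, the argument above applies verbatim because \eqref{1general_alpha1m} is valid there and the indices $m-1$ and $0\le k\le m-1$ are admissible throughout.
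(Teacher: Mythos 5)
Your algebra is correct, but the argument is circular relative to the role this lemma plays in the paper. The ``master identity'' you obtain by substituting $\alpha_{1,i} = (\alpha - s_i)/\beta$ into \eqref{1general_alpha1m} is literally \eqref{2general_sm}, i.e.\ the statement of Corollary~\ref{coro:sm-gen} --- and that corollary is deduced \emph{from} the present lemma, not the other way around. The deeper problem is the source of \eqref{1general_alpha1m}: in the paper it is derived from the Rota---Baxter relation \eqref{epsilon_r=1}, i.e.\ under the hypothesis that $R$ is already an RB-operator. The purpose of the chain Lemma~\ref{lm:sm+1Viasm} $\to$ this lemma $\to$ Corollary~\ref{coro:sm-gen} is precisely to establish \eqref{2general_sm} (equivalently \eqref{1general_alpha1m}) for coefficients that are merely \emph{defined} by the recurrence \eqref{2_1n} from the three initial values $\alpha_{1,0},\alpha_{1,1},\alpha_{1,2}$; this is what the proof of Theorem~\ref{theorem_r=1} consumes when it justifies $H(1,t,m)=0$ ``by \eqref{1general_alpha1m}, which hold by Corollary~\ref{coro:sm-gen}''. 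In that setting only the single instance $(k,t)=(0,1)$ of \eqref{1general_alpha1m} --- the defining relation \eqref{1_1n}, whose $s$-form is \eqref{sm+1-Start} --- is available, so the general instance cannot be taken as an input. Your derivation shows, in effect, that the lemma follows from its own corollary; it would only be legitimate in the necessity direction, where $R$ is assumed to be Rota---Baxter from the start.

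What is missing is a proof of \eqref{SumOfskInv} from the recurrence alone, and this is what the paper supplies: for fixed $m$ one inducts on $k$, the base case $k=0$ (and $k=m-1$) being exactly \eqref{sm-Rec-Start} --- your $m=1$ and $t=0$ reductions are correct and reappear there --- while for the step one writes $s_{k+1}$ via \eqref{sm-Rec} and $s_{m-k-2}$ via the inverted recurrence \eqref{sm-RecInverse}, and checks that both sides of \eqref{SumOfskInv} at $k+1$ equal a common nonzero-denominator multiple of $(1+1/s_2)$ times the corresponding sides at $k$, so the identity propagates. To repair your write-up you would have to replace the appeal to \eqref{1general_alpha1m} by such an inductive argument (or some other argument using only \eqref{sm-Rec-Start}); as it stands, the key identity is assumed rather than proved.
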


\begin{proof}
For a~fixed $m\geq2$, we do induction on $k$.
For $k = 0$, \eqref{SumOfskInv} coincides with the formula~\eqref{sm-Rec-Start}.
Suppose that the equality~\eqref{SumOfskInv} holds for all $t\leq k$.

If $k = m - 1$, then~\eqref{SumOfskInv} again coincides with~\eqref{sm-Rec-Start}.
Let $k < m - 1$. 
Due to~\eqref{sm-Rec} we obtain
\begin{equation} \label{sm-RecInverse}
s_{m-1} = \frac{s_m/s_2 - 1}{s_m+1}.
\end{equation}
Here $s_m\neq-1$, otherwise, $s_2 = s_m = -1$, a contradiction.

On the one hand, we express by~\eqref{sm-Rec},~\eqref{sm-RecInverse}
\begin{multline*}
s_{k+1}s_{m-k-2} - 1
= \frac{s_k + 1}{1/s_2 - s_k}\frac{s_{m-k-1}/s_2 - 1}{s_{m-k-1}+1} - 1 \\
= \frac{ s_k s_{m-k-1}/s_2 + s_{m-k-1}/s_2 - s_k - 1 
 - (-s_ks_{m-k-1}+s_{m-k-1}/s_2 - s_k+1/s_2) }{(1/s_2 - s_k)(s_{m-k-1}+1)} \\
= \frac{ (1+1/s_2)( s_k s_{m-k-1} - 1) }{(1/s_2 - s_k)(s_{m-k-1}+1)}. 
\end{multline*}
On the other hand,
\begin{multline*}
s_m( s_{k+1} + s_{m-k-2} + 1 - 1/s_2)
= s_m\left( \frac{s_k + 1}{1/s_2 - s_k} + \frac{s_{m-k-1}/s_2 - 1}{s_{m-k-1}+1} + 1 - 1/s_2 \right) \\
= \!\frac{s_m}{(1/s_2 - s_k)(s_{m-k-1}+1)}\!\left(\! s_ks_{m-k-1}+s_{m-k-1}+s_k+1 
- \frac{s_ks_{m-k-1}}{s_2}+\frac{s_{m-k-1}}{s_2^2}+s_k-\frac{1}{s_2} \right. 
\allowdisplaybreaks
 \\
\left. + \left(1-\frac{1}{s_2}\right)\left(-s_ks_{m-k-1}+\frac{s_{m-k-1}}{s_2} - s_k+\frac{1}{s_2}\right)\right) \\
= \frac{s_m}{(1/s_2 - s_k)(s_{m-k-1}+1)}(1+1/s_2)(s_k+s_{m-k-1}+1-1/s_2).
\end{multline*}
From the obtained relations and the induction hypothesis, the formula~\eqref{SumOfskInv} is proved for $k+1$.
Hence, this formula holds by induction for all $m\geq2$ and $0\leq k\leq m-1$.
\end{proof}

\begin{corollary} \label{coro:sm-gen}
For all $0\leq t\leq k\leq m - 1$, $m\geq1$, the following relations are fulfilled:
\begin{gather}\label{2general_sm}
(s_k + s_{m-k-1} - s_t - s_{m-t-1})s_m 
 = s_k s_{m-k-1} - s_ts_{m-t-1}.
\end{gather}
\end{corollary}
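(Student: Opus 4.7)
The plan is to derive the corollary as an immediate consequence of the identity \eqref{SumOfskInv} just established in the preceding lemma. Since that identity is indexed by a pair $(m,k)$ with $0 \le k \le m-1$, and the bound $0 \le t \le k \le m-1$ in the corollary fits within the same range, I would simply apply \eqref{SumOfskInv} twice — once at index $k$ and once at index $t$ — and subtract.

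More concretely, first I would write down
\[
s_k s_{m-k-1} - 1 = s_m\bigl(s_k + s_{m-k-1} + 1 - 1/s_2\bigr)
\]
and
\[
s_t s_{m-t-1} - 1 = s_m\bigl(s_t + s_{m-t-1} + 1 - 1/s_2\bigr),
\]
both valid by \eqref{SumOfskInv}. Subtracting the second from the first, the constants $-1$ on the left-hand side and the terms $s_m(1 - 1/s_2)$ on the right-hand side cancel, leaving exactly the asserted equality \eqref{2general_sm}.

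Since the argument is a two-line subtraction, there is really no obstacle; the only thing to double-check is that both applications of \eqref{SumOfskInv} are in its range of validity, which is guaranteed by the hypothesis $0 \le t \le k \le m-1$ with $m \ge 1$. Thus no new computation is needed, and the corollary follows instantly.
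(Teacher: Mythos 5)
Your proof is correct and is exactly the intended derivation: the paper states the corollary without further argument, precisely because it follows by applying \eqref{SumOfskInv} at the indices $k$ and $t$ and subtracting, just as you do. The range check you mention is indeed the only thing to verify, and it holds under the stated hypotheses.
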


The equations~\eqref{1general_alpha1m} in terms of~$s_i$ are identical to the equations~\eqref{2general_sm}.

Now, let us prove the main result of the work.
We show that any monomial operator on $F[x,y]$ of the form $R(x^n y^m) = \alpha_{n,m}y^{n+m}$ may be uniquely extended by the derived formulas from the values $\alpha_{1,0},\alpha_{1,1},\alpha_{1,2}$ to an RB-operator.
For this, we additionally exclude the case $2\alpha_{1,1}\neq \alpha_{1,0}+\alpha_{1,2}$; for RB-operators it does not hold by Remark~\ref{rem:s2}.

\begin{theorem} \label{theorem_r=1}
Let $\alpha_{1,0},\alpha_{1,1},\alpha_{1,2}\in F$ be pairwise distinct such that $2\alpha_{1,1}\neq\alpha_{1,0}+\alpha_{1,2}$.
Then a monomial operator of the form $R(x^n y^m) = \alpha_{n,m}y^{n+m}$, where $\alpha_{n,m}$ are expressed via $\alpha_{1,0},\alpha_{1,1},\alpha_{1,2}$ by ~\eqref{2_1n},~\eqref{k+1n}, is an RB-operator of weight $1$ on $F[x,y]$.
\end{theorem}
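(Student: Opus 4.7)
The plan is to verify that the identity \eqref{epsilon_r=1}, which I will write as
$$
E(n,m,s,t) := \alpha_{n,m}\alpha_{s,t} - \alpha_{n,m}\alpha_{s,m+n+t} - \alpha_{s,t}\alpha_{n,s+m+t} - \alpha_{n+s,m+t},
$$
vanishes for every $(n,m,s,t)\in\mathbb{N}^4$. Since $\alpha_{0,l}=-1$ for $l>0$, $E=0$ collapses to the trivial $0=0$ whenever $n=0$ or $s=0$, so I may assume $n,s\geq 1$; the manifest symmetry $E(n,m,s,t)=E(s,t,n,m)$ further lets me assume $n\leq s$. I would then prove $E\equiv 0$ by double induction, outer on $n$ and inner on $s$, with \eqref{k+1n} driving both inductive steps.

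\textbf{Base case $n=1$.} I would first handle $n=s=1$: expanding $\alpha_{2,t+m}$ via \eqref{k+1n} reduces $E(1,m,1,t)=0$ to a purely $\alpha_{1,\cdot}$ identity. Passing to the normalized variables $s_j=\alpha-\beta\alpha_{1,j}$ introduced before \eqref{sm+1-Start}, this identity is precisely \eqref{2general_sm} (equivalently, the master relation \eqref{2general_alpha1m}), which has already been established via the two-term recursion \eqref{sm-Rec-Start} and the bilinear identity \eqref{SumOfskInv}. For the inner induction $s\to s+1$ I would apply \eqref{k+1n} to rewrite $\alpha_{s+2,t+m}$ and $\alpha_{s+1,t}$ in terms of $\alpha_{s,\cdot}$ and $\alpha_{1,\cdot}$, converting $E(1,m,s+1,t)$ into an $\alpha_{1,\cdot}$-linear combination of values of $E(1,m,s,\cdot)$ at shifted arguments, plus a remainder that is handled by the $s=1$ case.

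\textbf{Inductive step $n\to n+1$.} Applying \eqref{k+1n} simultaneously to $\alpha_{n+1,m}$, to $\alpha_{n+1,s+m+t}$, and to $\alpha_{n+s+1,m+t}$ and regrouping, the expression $E(n+1,m,s,t)$ becomes an $\alpha_{1,\cdot}$-linear combination of finitely many instances of $E(n,\cdot,s,\cdot)$ at neighbouring arguments. Each summand vanishes by the outer induction hypothesis, yielding $E(n+1,m,s,t)=0$.

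\textbf{Main obstacle.} The reductions in both steps are voluminous but mechanical; the essential difficulty is organizational bookkeeping rather than a conceptual hurdle. To keep things tractable I would carry out every manipulation in the variables $s_j$ and repeatedly invoke \eqref{sm-Rec-Start}, \eqref{sm-Rec}, and \eqref{SumOfskInv}, which together let any purely $s$-variable identity encountered be collapsed to an already-proved one. The standing hypothesis $2\alpha_{1,1}\neq\alpha_{1,0}+\alpha_{1,2}$, equivalently $s_2\neq 1$ (see Remark~\ref{rem:s2}), is used precisely to ensure that no denominator appearing in \eqref{2_1n} or in any intermediate reduction vanishes. I do not expect a conceptual shortcut bypassing this induction: for generic $(\alpha_{1,0},\alpha_{1,1},\alpha_{1,2})$ the operator does not arise from a subalgebra decomposition of $F[x,y]$ (cf.\ Lemma~\ref{SplitRB}), so a direct term-by-term verification of \eqref{epsilon_r=1} seems unavoidable.
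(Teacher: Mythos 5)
Your proposal is correct in structure and follows essentially the paper's route: the case $n=1$ is settled by an induction on $s$ driven by \eqref{k+1n}, with the residual identity among the $\alpha_{1,\cdot}$ being exactly \eqref{1general_alpha1m} (equivalently \eqref{2general_sm}, Corollary~\ref{coro:sm-gen}), and your outer induction on $n$ is just the mirror image, via the symmetry of \eqref{epsilon_r=1} under $(n,m)\leftrightarrow(s,t)$, of the paper's induction on $s$ for fixed~$n$. One detail to correct: in the step $n\to n+1$ the regrouping yields $(\alpha_{1,0}-\alpha_{1,n+m+s+t})E(n,m,s,t)-\alpha_{1,0}E(n,m+1,s,t)$ plus the remainder $\alpha_{n,m}\bigl(\alpha_{s,t+n+m+1}(\alpha_{1,n+m}-\alpha_{1,0})+\alpha_{s,t+n+m}(\alpha_{1,0}-\alpha_{1,n+m+s+t})+\alpha_{s,t}(\alpha_{1,n+m+s+t}-\alpha_{1,n+m})\bigr)$, which is not an instance of $E(n,\cdot,s,\cdot)$ with an $\alpha_{1,\cdot}$ coefficient but rather $\alpha_{n,m}$ times an instance of the already-proved case where the first exponent equals $1$ (with arbitrary $s$ and second index $n+m$); since your base case is established for all $s$ before this step, the argument goes through unchanged --- this is precisely how the paper absorbs its analogous remainder $-\alpha_{s,t}H(n,m,s+t)$.
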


\begin{proof}
Let us prove~\eqref{epsilon_r=1} by inductions on $n$ and $s$.
Let $\alpha_{1,i} := q_i$.
For $n = 1$,
we check the required equality directly,
\begin{multline*}
\alpha_{1,m}\alpha_{s,t+m+1}+\alpha_{s,t}\alpha_{1,t+m+s}+\alpha_{s+1,t + m}-\alpha_{1,m}\alpha_{s,t} \\
 \mathop{=}\limits^{\eqref{k+1n}} 
 \alpha_{1,m}\alpha_{s,t+m+1}+\alpha_{s,t}\alpha_{1,t+m+s}
 + \alpha_{s,t+m}(\alpha_{1,0}-\alpha_{1,s+m+t})
 - \alpha_{1,0}\alpha_{s,m+t+1}
 - \alpha_{1,m}\alpha_{s,t} \\
 = H(s,t,m):= \alpha_{s,t+m+1}(q_m - q_0)
 + \alpha_{s,t+m}(q_0 - q_{m+s+t})
 + \alpha_{s,t}(q_{m+t+s}-q_m).
\end{multline*}
Let us show that $H(s,t,m) = 0$ for any $s$.
For $s = 1$, 
$$
H(1,t,m)
 = \alpha_{1,t+m+1}(\alpha_{1,m}+\alpha_{1,t}-\alpha_{1,0}-\alpha_{1,t+m})
 + \alpha_{1,0}\alpha_{1,t+m} - \alpha_{1,m}\alpha_{1,t}
 \mathop{=}\limits^{\eqref{1general_alpha1m}} 0.
$$
If $t+m\geq2$, then we have $H(1,t,m) = 0$ by~\eqref{1general_alpha1m}, which hold by Corollary~\ref{coro:sm-gen}.
Otherwise, $H(1,t,m) = 0$ trivially.

Show the inductive step for~$s$.
The equalities
\begin{gather*}
\alpha_{s+1,t+m+1}(q_m-q_0) 
 \mathop{=}\limits^{\eqref{k+1n}} 
 \alpha_{s,t+m+1}(q_m-q_0)(q_0-q_{s+t+m+1})
 - q_0\alpha_{s,t+m+2}(q_m-q_0), \\
\alpha_{s+1,t+m}(q_0{-}q_{s+t+m+1}) 
  \mathop{=}\limits^{\eqref{k+1n}}
\alpha_{s,t+m}(q_0{-}q_{s+t+m+1})(q_0{-}q_{s+t+m})
 - q_0\alpha_{s,t+m+1}(q_0{-}q_{s+t+m+1}),\\
\alpha_{s+1,t}(q_{s+t+m+1}-q_m) 
  \mathop{=}\limits^{\eqref{k+1n}}
 \alpha_{s,t}(q_{s+t+m+1}-q_m)(q_0-q_{s+t}) 
 - q_0\alpha_{s,t+1}(q_{s+t+m+1}-q_m)
\end{gather*}
imply that
\begin{multline*}
H(s+1,t,m) 
 = -q_0H(s,t+1,m) 
 + \alpha_{s,t+m+1}(q_m-q_0)(q_0-q_{s+t+m+1}) \\
 + \alpha_{s,t+m}(q_0-q_{m+s+t+1})(q_0-q_{s+t+m})
 + \alpha_{s,t}(q_{s+t+m+1}-q_m)(q_0-q_{s+t}) \\
 = -q_0H(s,t+1,m) + (q_0-q_{s+t+m+1})H(s,t,m)  
 \allowdisplaybreaks
 \\
 + \alpha_{s,t}( q_{s+t+m+1}(q_0-q_m-q_{s+t}+q_{m+s+t}) + q_mq_{s+t} - q_0q_{s+t+m}) \\
 \mathop{=}\limits^{\eqref{1general_alpha1m}} 
 -q_0H(s,t+1,m) + (q_0-q_{t+s+m+1})H(s,t,m),
\end{multline*}
it is zero by the induction hypothesis.

Let $n>1$.
Let us prove that
$$
G(n,m,s,t)
 = \alpha_{n,m}\alpha_{s,t + m + n} + \alpha_{s,t}(\alpha_{n,t + m + s}- \alpha_{n,m}) + \alpha_{n + s,t + m}
$$
is equal to zero by induction on~$s$. 
For $s = 1$, we have $G(n,m,1,t) = H(n,m,t) = 0$.
Show the induction step for~$s$.
By the equalities
\begin{gather*}
\alpha_{n,m}\alpha_{s+1,t+m+n}
 \mathop{=}\limits^{\eqref{k+1n}} 
\alpha_{n,m}\alpha_{s,n+m+t}(q_0-q_{n+m+s+t})
 - q_0\alpha_{n,m}\alpha_{s,n+m+t+1}, \\
\alpha_{s+1,t}(\alpha_{n,m+s+t+1} {-} \alpha_{n,m})
  \!\mathop{=}\limits^{\eqref{k+1n}}\!
\alpha_{s,t}(\alpha_{n, m + s + t +1} {-} \alpha_{n,m})(q_0 {-} q_{s+t})
 {-} q_0\alpha_{s,t+1}(\alpha_{n,m+ s+ t+1} {-} \alpha_{n,m}), \\
\alpha_{n + s+1,m + t}
  \mathop{=}\limits^{\eqref{k+1n}}
 \alpha_{n + s,m + t}(q_0-q_{n+m+s+t})
 - q_0\alpha_{n+s,m+t+1},
\end{gather*}
we conclude that
\begin{multline*}
G(n,m,s+1,t) 
 = G(n,m,s,t)(q_0-q_{n+m+s+t}) - q_0G(n,m,s,t+1) \\
 + \alpha_{s,t}( \alpha_{n,m}(q_{s+t}-q_{n+m+s+t}) 
 - \alpha_{n,m+s+t}(q_0-q_{n+m+s+t})
 + \alpha_{n,m+s+t+1}(q_0-q_{s+t})) \\
 = G(n,m,s,t)(q_0-q_{n+m+s+t}) - q_0G(n,m,s,t+1)
 - \alpha_{s,t}H(n,m,s+t).
\end{multline*}
The final expression is equal to zero by the already proven equality $H(n,m,s+t) = 0$ and by the induction hypothesis.
\end{proof}

There are formulas that allow us to express $\alpha_{k,n}$ through the elements $\alpha_{1,i}$.

\begin{lemma}
Let $q_i = \alpha_{1,i}$, then
\begin{equation}\label{10Sum}
\alpha_{k + 1,m}
 = \sum\limits_{s = 0}^k (-1)^s q_0^{k - s} 
\left(
\sum\limits_{m \leq i_1 < \dots < i_{s + 1} \leq m + k} 
(-1)^{i_1 - m} \binom{k}{i_1 - m} q_{i_1} \dots q_{i_{s + 1}}
\right).
\end{equation}
\end{lemma}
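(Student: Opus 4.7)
The plan is induction on $k$, using the recurrence~\eqref{k+1n} rewritten in terms of $q_i = \alpha_{1,i}$ as
$$\alpha_{k+2,m} = \alpha_{k+1,m}(q_0 - q_{k+m+1}) - q_0\,\alpha_{k+1,m+1}.$$
The base case $k = 0$ of~\eqref{10Sum} reduces to the single term $s = 0$, $i_1 = m$, which gives $\alpha_{1,m} = q_m$, so nothing to prove. Denote the right-hand side of~\eqref{10Sum} by $F(k,m)$. Assuming $\alpha_{k+1,m} = F(k,m)$ for all $m$, the inductive step reduces to the purely formal identity
$$F(k+1,m) = (q_0 - q_{k+m+1})\,F(k,m) - q_0\,F(k,m+1).$$

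To verify this identity I would expand each of the three summands on the right as a signed sum indexed by pairs $(s,(i_1,\dots,i_{s+1}))$ with $m \leq i_1 < \dots < i_{s+1} \leq m+k+1$, and compare the coefficient of the monomial $q_0^{k+1-s}q_{i_1}\cdots q_{i_{s+1}}$ with the corresponding term of $F(k+1,m)$. The three summands contribute to disjoint or overlapping index ranges: $q_0 F(k,m)$ contributes for tuples inside $[m,m+k]$ with coefficient $\binom{k}{i_1-m}$; $-q_0 F(k,m+1)$ contributes for tuples inside $[m+1,m+k+1]$ with coefficient $\binom{k}{i_1-m-1}$ (after shifting); and $-q_{k+m+1} F(k,m)$ contributes for tuples with $i_{s+1}=m+k+1$, where the remaining $s$ indices lie in $[m,m+k]$, giving coefficient $\binom{k}{i_1-m}$ and promoting a length-$(s+1)$ tuple into a length-$(s+2)$ tuple. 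The essential algebraic step is Pascal's identity $\binom{k}{j-1}+\binom{k}{j}=\binom{k+1}{j}$, applied to the coefficient attached to $i_1$; the boundary cases $i_1 = m$, $i_{s+1} = m+k+1$, and $s = 0$ are handled by observing that one of the binomial coefficients vanishes or that only one of the three sources actually contributes.

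The main obstacle is not mathematical depth but careful bookkeeping: one must track simultaneously the sign factors $(-1)^s$ and $(-1)^{i_1-m}$, the shift $i_1 \mapsto i_1 - 1$ implicit in passing from $F(k,m+1)$ to a sum over tuples in $[m,m+k+1]$, and the index reshuffling that occurs when $-q_{k+m+1} F(k,m)$ is reinterpreted as a contribution at the new parameter $s' = s+1$. Once the three sources are collected and indexed uniformly by tuples in $[m,m+k+1]$, a routine case analysis (whether $i_1 = m$ or not, and whether $i_{s+1} = m+k+1$ or not) plus Pascal's rule closes the induction.
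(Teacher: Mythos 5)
Your proposal is correct and is essentially the paper's own proof: both argue by induction on $k$, reduce the step to the recurrence \eqref{k+1n} written as $\alpha_{k+2,m}=(q_0-q_{k+m+1})\alpha_{k+1,m}-q_0\alpha_{k+1,m+1}$, and close the resulting combinatorial identity by re-indexing the tuples and applying Pascal's rule to the binomial coefficient attached to $i_1$. The only (cosmetic) difference is direction: the paper splits $\binom{k}{i_1-m}$ inside the claimed sum to recover the three terms $q_0\alpha_{k,m}$, $-q_0\alpha_{k,m+1}$, $-q_{k+m}\alpha_{k,m}$, whereas you assemble those three contributions and match coefficients against $F(k+1,m)$; the bookkeeping and boundary cases you describe do close the induction.
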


\begin{proof}
Denote the right-hand side of~\eqref{10Sum} as $\beta_{k+1,m}$.
We prove that $\beta_{k+1,m} = \alpha_{k+1,m}$ by induction on $k$.
The induction base is true, since $\beta_{1,m} = q_m$.
Let us rewrite $\beta_{k+1,m}$ as follows,
\begin{multline*}
\!\!\!\!\beta_{k + 1,m} 
 {=} \sum\limits_{s = 0}^{k-1} ({-}1)^s q_0^{k - s} 
\bigg(
\sum\limits_{m \leq i_1 < \dots < i_{s + 1} \leq m + k} 
\!\!({-}1)^{i_1 - m} \binom{k}{i_1 - m} q_{i_1} \dots q_{i_{s + 1}} 
\bigg) {+} ({-}1)^k q_m \dots q_{m + k}
\\ 
 = \sum\limits_{s = 0}^{k - 1}(-1)^s q_0^{k - s} 
\bigg(
\sum\limits_{m \leq i_1 < \dots < i_{s + 1} \leq m + k} 
(-1)^{i_1 - m} \bigg( \binom{k - 1}{i_1 - m - 1} + \binom{k - 1}{i_1 - m} \bigg) q_{i_1} \dots q_{i_{s + 1}}
\bigg) \\
 + (-1)^k q_m \dots q_{m + k}.
\end{multline*}
We calculate the sums for $i_{s+1} = m+k$ and $i_{s+1} < m+k$ separately.
Note that changing the index $s+1$ by~$s$, we get the extra term $q_0^k q_{m+k}$, which we subtract in the following formula:
\begin{multline*}
\sum\limits_{s = 0}^{k - 1} (-1)^s q_0^{k - s} 
\bigg(
\sum\limits_{m \leq i_1 < \dots < i_{s + 1} \leq m + k} 
(-1)^{i_1 - m} \binom{k -1}{i_1 - m} q_{i_1} \dots q_{i_{s + 1}}
\bigg) \\
= q_0\sum\limits_{s = 0}^{k -1} (-1)^s q_0^{k - s - 1} 
\bigg(
\sum\limits_{m \leq i_1 < \dots < i_{s + 1} \leq m + k - 1}
(-1)^{i_1 - m} \binom{k -1}{i_1 - m}q_{i_1} \dots q_{i_{s + 1}}
\bigg) \allowdisplaybreaks \\ 
+ q_0q_{m + k}\sum\limits_{s = 0}^{k -1} (-1)^s q_0^{k - s - 1} 
\bigg(
\sum\limits_{m \leq i_1 < \dots < i_s \leq m + k - 1} 
(-1)^{i_1 - m} \binom{k -1}{i_1 - m} q_{i_1} \dots q_{i_s}
\bigg) - q_0^k q_{m+k} \allowdisplaybreaks \\
= q_0 \alpha_{k,m} 
+ q_{m + k}\sum\limits_{s = 1}^{k} (-1)^s q_0^{k - s} 
\bigg(
\sum\limits_{m \leq i_1 < \dots < i_s \leq m + k - 1} 
(-1)^{i_1 - m} \binom{k -1}{i_1 - m}q_{i_1} \dots q_{i_s}
\bigg) \\
 - (-1)^k q_m \dots q_{m + k} 
 = q_0\alpha_{k,m}-q_{k+m}\alpha_{k,m}
 - (-1)^k q_m \dots q_{m + k}.
\end{multline*}
Finally, we have
\begin{multline*}
\beta_{k + 1,m} 
 = -q_0\sum\limits_{s = 0}^{k - 1} (-1)^s q_0^{k - s - 1}\bigg(
 \sum\limits_{m+1 \leq i_1 < \dots < i_{s + 1} \leq m + k} 
 (-1)^{i_1 - m - 1} \binom{k - 1}{i_1 - m - 1}q_{i_1} \dots q_{i_{s + 1}}\bigg) \\
 + q_0\alpha_{k,m}-q_{k+m}\alpha_{k,m}
 - (-1)^k q_m \dots q_{m + k}
 + (-1)^k q_m \dots q_{m + k} \\
 = - q_0 \alpha_{k,m+1}+q_0\alpha_{k,m}-q_{k+m}\alpha_{k,m}
 = (\alpha_{1,0}-\alpha_{1,k+m})\alpha_{k,m} - \alpha_{1,0}\alpha_{k,m + 1}
 \mathop{=}\limits^{\eqref{k+1n}}\alpha_{k + 1,m}. 
\qedhere
\end{multline*}
\end{proof}

\subsection{Examples of RB-operators coming from averaging operators}

Now we consider a~very interesting example, when an RB-operator may be obtained by using the Fibonacci sequence.

For $m \in \mathbb{N}$, define the Fibonacci sequence $f_m$ as follows,
$$
f_m 
= \begin{cases}
0, & m = 0, \\
1, & m = 1, 2, \\
f_{m - 1} + f_{m - 2}, & m > 2.
\end{cases}
$$
This sequence can be generalized for $m \in \mathbb{Z}$ by the formula 
$f_{-m} := (-1)^{m + 1} f_m$, when $m > 0$.
Note that the relation $f_{i+2} = f_{i+1} + f_i$ holds for every $i \in \mathbb{Z}$.

\begin{example}
Let $a,b\in F$ be such that $a\neq b$. 
The following operator is an RB-operator of weight~1 on $F[x,y]$:
$$
R(x^n y^m) 
 = \begin{cases}
 \left(\frac{(-1)^{n-1}}{f_{m + n}} \sum\limits_{s = 0}^n 
    \binom{n}{s} a^{ k - s} b^s  f_{m - n + s}\right)y^{n+m}, & n+m>0, \\
  -1, & \mbox{otherwise}.
\end{cases}
$$
This RB-operator is determinated by the values $\alpha_{1,0} = a$, $\alpha_{1,1} = b$, and $\alpha_{1,2} = (a+b)/2$.
\end{example}

Let us prove the formula 
\begin{equation}\label{FIB_main}
\alpha_{k,m} = \frac{(-1)^{k - 1}}{f_{m + k}} \sum\limits_{s = 0}^{k} 
 \binom{k}{s} a^{ k - s} b^s  f_{m - k + s}.
\end{equation}
by induction on $m$ for $k = 1$ and afterwards by induction on $k$.
When $k=1$, the formula~\eqref{FIB_main} converts to the following one, 
\begin{gather}\label{FIB_k=1}
\alpha_{1,m} = \frac{f_{m - 1}a + f_m b}{f_{m + 1}}, \quad m \geq 0. 
\end{gather}
For $m = 0,1,2$, \eqref{FIB_k=1} holds due to the choice of $\alpha_{1,t}$, $t = 0,1,2$.
Suppose that~\eqref{FIB_k=1} is true for all $t < m$.
By the induction assumption and~\eqref{2_1n}, we express
\begin{multline*}
\alpha_{1, m + 1}
 = \frac{\alpha_{1,0}\alpha_{1,m} - \alpha_{1,1}\alpha_{1,m - 1}}{\alpha_{1,0} + \alpha_{1,m} - \alpha_{1,1} - \alpha_{1,m - 1}}  \\ 
 = \frac{ f_{m-1}f_m a^2 + (f_m^2 - f_{m - 2}f_{m+1})ab - f_{m-1}f_{m+1}b^2 }
 { (f_m f_{m+1} + f_m f_{m-1} - f_{m-2}f_{m+1})a - (f_m f_{m+1} + f_{m-1}f_{m+1} - f_m^2)b }.
\end{multline*}
Transform the coefficients of the denominator
\begin{gather*}
f_m f_{m+1} + f_m f_{m-1} - f_{m-2}f_{m+1} = 
f_{m-1}f_{m+1} + f_{m-1} f_m = f_{m+2}f_{m-1}, \\
f_m f_{m+1} + f_{m-1}f_{m+1} - f_m f_m = 
f_m f_{m-1} + f_{m+1}f_{m-1} = f_{m+2}f_{m-1}.
\end{gather*}
For the numerator, it is true that
$$
f_{m-1}f_m a^2 + (f_m^2 - f_{m - 2}f_{m+1})ab - f_{m-1}f_{m+1}b^2
 = f_{m-1}(a - b)(f_m a + f_{m+1}b).
$$
So~\eqref{FIB_k=1} is proven.

Suppose that~\eqref{FIB_main} holds for all $t < k$. 
By~\eqref{k+1n}, applying the induction assumption and already proven formula~\eqref{FIB_k=1}, we have
\begin{multline*}
\alpha_{k + 1, m} = \alpha_{k, m}( a - \alpha_{1, m + k}) - a\alpha_{k, m + 1} \\
 = \frac{(-1)^{k - 1}}{f_{m + k}} \sum\limits_{s = 0}^{k} 
 \binom{k}{s} a^{ k + 1 - s} b^s  f_{m - k + s} 
 +  \left(\frac{f_{m + k - 1}a + f_{m+k} b}{f_{m + k + 1}}\right)
  \frac{(-1)^k}{f_{m + k}} \sum\limits_{s = 0}^{k} \binom{k}{s} a^{ k - s} b^s  f_{m - k + s}  \\
  +  \frac{(-1)^k}{f_{m + k + 1 }} \sum\limits_{s = 0}^{k}\binom{k}{s} a^{ k + 1 - s} b^s  f_{m - k + s + 1}.
\end{multline*}
Compute the coefficient $\gamma_s$ at $a^{k + 1 - s}b^s$.

For $s = k + 1$, we have
$\gamma_{k+1} 
 = (-1)^k \binom{k}{k} \frac{f_{m + k}f_m}{f_{m + k}f_{m + k + 1}}
 = \frac{(-1)^k}{f_{m + k + 1}} \binom{k}{k}f_m$.

For $s = 0$, we have
$$
\gamma_0 
 = \frac{(-1)^k}{f_{m + k + 1}}\left( 
 - \frac{f_{m - k} f_{m + k + 1}}{f_{m + k}} 
 + \frac{f_{m + k -1}f_{m - k}}{f_{m + k}}
 + f_{m + 1 -k}\right).
$$ 
Since $f_{m + k - 1} f_{m - k} - f_{m + k + 1}f_{m - k} = -f_{m + k}f_{m - k}$, we obtain
$$
\gamma_0 
 = \frac{(-1)^k}{f_{m + k + 1}} (- f_{m - k} + f_{m - k + 1} )
 = \frac{(-1)^k}{f_{m + k + 1}} f_{m - k - 1}.
$$

Now we consider the case, when $ 1 \leq s \leq k$:
\begin{multline*}
\gamma_s
 = (-1)^k\left(
   - \binom{k}{s}\frac{f_{m - k + s}}{f_{m + k}}
    + \binom{k}{s} \frac{f_{m + k - 1}f_{m - k + s}}{f_{m + k}f_{m + k + 1}} \right. \\
 \left. + \binom{k}{s - 1}\frac{f_{m + k}f_{m - k + s - 1}}{f_{m + k}f_{m + k + 1}} 
    + \binom{k}{s}\frac{f_{m - k + s + 1}}{f_{m + k + 1}}
    \right) \\
    {=}  \frac{(-1)^k}{f_{m + k + 1}}\left(
    \binom{k}{s}\left(\frac{f_{m + k - 1}f_{m - k + s}}{f_{m + k}} 
    {-} \frac{f_{m - k + s}f_{m + k + 1}}{f_{m + k}}\right) 
    {+} \binom{k}{s - 1} f_{m - k + s - 1} 
    {+} \binom{k}{s} f_{m - k + s + 1}
    \right)\!.\!\!\!\!\!
\end{multline*}
The first two summands from the last formula are reduced to
$f_{m + k - 1}f_{m - k + s} - f_{m + k + 1}f_{m - k + s} = - f_{m + k}f_{m - k + s}$, 
so we can divide it by $f_{m + k}$.
Joint with the last summand, we get 
$\binom{k}{s} ( f_{m - k + s + 1} - f_{m - k + s}) = \binom{k}{s} f_{m - k + s - 1}$.
Finally, we have
\begin{gather*}
\gamma_s
 = \frac{(-1)^k}{f_{m + k + 1}}\bigg(
    \binom{k}{s} f_{m - k + s - 1} + \binom{k}{s -1} f_{m - k + s - 1}  \bigg) 
    =  \frac{(-1)^k}{f_{m + k + 1}} \binom{k + 1}{s}f_{m - k + s - 1}.
\end{gather*}
Therefore, we prove the induction step, and the operator~$R$ is Rota---Baxter one by Theorem~\ref{theorem_r=1}.

The appearance of the Fibonacci numbers~$f_n$ as denominators and coefficients at $\alpha_{1,0}$ and $\alpha_{1,1}$ is not surprising.
Indeed, we have $s_2 = -1/2$, and the characteristic polynomial~\eqref{s2-char} for the sequence $\{x_n\}_{n\geq2}$ is equal to $\lambda^2 -\lambda - 1$, this is exactly the characteristic polynomial for~$f_n$.

The construction of RB-operators through averaging operators was inspired by the following example.

\begin{example}[\!\!\cite{Viellard-Baron}]
The following operator is an RB-operator of weight $-1$ on $F[x,y]$:
$$
R(x^n y^m) 
 = \begin{cases}
\dfrac{m}{n+m}y^{n+m}, & n+m>0, \\
1, & \mbox{otherwise}.
\end{cases}
$$
This RB-operator is determinated by the values $\alpha_{1,0} = 0$, $\alpha_{1,1} = 1/2$, and $\alpha_{1,2} = 2/3$.
\end{example}

\section{Discussion}

We have not considered the case (2), when $r>1$.
Then we have the following system
\begin{equation}
\alpha_{n,m}\alpha_{s,t} = 
\alpha_{n,m}\alpha_{s,t + m + r n} + \alpha_{s,t}\alpha_{n,t + m + r s} + \alpha_{n + s,t + m}. \label{LastFormula}
\end{equation}
We have seen that for $r = 1$ the problem of describing RB-operators of the type (2) from Theorem~\ref{AveOpClassif} is not trivial. 
For $r > 1$, this problem is much more difficult.
We believe that all difficulties are concerned only with technical details and we expect that the approach applied for $r = 1$ works for $r>1$ too.
The main problem seems to be in consideration of all particular cases.

Let us show how obtained RB-operators of nonzero weight on $F[x,y]$ allow us to get RB-operators of nonzero weight on other algebras.

If one wants to describe monomial homomorphic averaging operators on the algebra of Laurent polynomials $F[x,x^{-1},y,y^{-1}]$, we get exactly the same list of such operators as in Theorem~\ref{AveOpClassif} with only one clarification: all $n,m,r$ are taken from $\mathbb{Z}$. Hence, we get plenty of nontrivial RB-operators of nonzero weight on $F[x,x^{-1},y,y^{-1}]$ from the already obtained RB-operators on $F[x,y]$.
Analogously to Problem~1, we state the following one.

{\bf Problem 2}. 
Describe all Rota---Baxter operators of nonzero weight on $F[x,x^{-1},y,y^{-1}]$ of the form 
$$
R(x^n y^m) = \alpha_{n,m}T(x^n y^m),\quad n,m\in\mathbb{Z},
$$
where $\alpha_{n,m}$ are some scalars from~$F$ 
and $T$ is a monomial homomorphic averaging operator on $F[x,x^{-1},y,y^{-1}]$.

Another direction of the applications of the obtained results is to consider monomial RB-operators on the algebra of formal power series $F[[x,y]]$.
Note that RB-operators arisen from averaging operators of Cases~(2) and~(3) are well-defined.

\section*{Acknowledgements}

The author is grateful to Vsevolod Gubarev for scientific supervision.
The author is also grateful to Maxim Goncharov and Valery G. Bardakov for the helpful remarks.

\noindent Artem Khodzitskii \\
Novosibirsk State University \\
Pirogova str. 2, 630090 Novosibirsk, Russia \\
e-mail: a.khodzitskii@g.nsu.ru
\end{document}